\newtheorem{lem}{Lemma}[section]
\newtheorem{cor}[lem]{Corollary}
\newtheorem{thm}[lem]{Theorem}
\newtheorem{assumption}[lem]{Assumption}
\newtheorem{defi}[lem]{Definition}
\theoremstyle{remark}
\newtheorem{rem}[lem]{Remark}
\numberwithin{equation}{section}
\newcommand{\ep}{\varepsilon}
\newcommand{\ue}{u^\ep}
\newcommand{\ve}{v^\ep}
\newcommand{\n}{\nabla }
\newcommand{\om}{\Omega}
\newcommand{\ombar}{\overline{\Omega}}
\newcommand{\eun}{\displaystyle{\frac{1}{\ep}}}
\newcommand{\R}{\mathbb{R}}
\newcommand{\vsp}{\vspace{8pt}}
\newcommand{\di}{\displaystyle}
\newcommand{\Pe}{(P^{\;\!\ep})}
\newcommand{\Pz}{(P^{\;\!0})}
\newcommand{\Qe}{(Q^{\;\!\ep})}
\newcommand {\Q}{Q_T}
\newcommand{\support}{\om _t ^{supp}}
\newcommand{\supportzero}{\om _{t_0} ^{supp}}
\newcommand{\gammasupport}{\Gamma _t ^{supp}}
\newcommand{\gammasupportzero}{\Gamma _{t_0} ^{supp}}
\newcommand{\nusupport}{\nu _t ^{supp}}
\newcommand{\CG}{C^\star}
\newcommand{\CGBIS}{C_\star}
\newcommand{\f}{\tilde f _\delta}
\newcommand{\hm}{\alpha _- (\de)}
\newcommand{\hp}{\alpha _+ (\de)}
\newcommand{\h}{a(\de)}
\newcommand{\mm}{\mu(\de)}
\newcommand{\emutt}{e^{\mu (\pm \ep M) t/\ep }}
\newcommand{\emuttmoins}{e^{\mu (- \ep M ) t/\ep }}
\newcommand{\emuttplus}{e^{\mu (\ep M) t/\ep }}
\newcommand{\emuttplusep}{e^{\mu (\ep M) t^\ep/\ep }}
\newcommand{\de}{\delta}
\newcommand{\drift}{(P^{0}_{\ep, \text{drift}})}
\newcommand{\intdrift}{\Gamma ^{\ep, \text{drift}} _0}
\newcommand{\Gammadrift}{\Gamma ^{\ep, \text{drift}}}
\newcommand{\Omegadrift}{\Omega ^{\ep, \text{drift}}}
\title{Convergence to a propagating front in a degenerate Fisher-KPP equation
with advection}
\author{ }
\date{}
\begin{document}

\maketitle \vspace{-20 mm}

\begin{center}

{\large\bf Matthieu Alfaro\footnote{ The first author is supported
by the French �Agence Nationale de la Recherche� within the
project IDEE
(ANR-2010-0112-01).} }\\[1ex]
I3M, UMR CNRS 5149,\\
 Universit\'e de Montpellier 2, CC051,\\
 Place Eug\`ene Bataillon, 34095 Montpellier Cedex 5, France,\\[2ex]

{\large\bf Elisabeth Logak }\\[1ex]
D\'epartement de Math\'ematiques, UMR CNRS 8088,\\
 Universit\'e de Cergy-Pontoise, \\
2 avenue Adolphe Chauvin, 95302 Cergy-Pontoise Cedex, France. \\[2ex]

\end{center}

\vspace{15pt}

%\tableofcontents

\begin{abstract}

We consider a Fisher-KPP equation with density-dependent diffusion
and advection, arising from a chemotaxis-growth model. We study
its behavior as a small parameter, related to the thickness of a
diffuse interface, tends to zero. We analyze, for small times, the
emergence of transition layers induced by a balance between
reaction and drift effects. Then we investigate the propagation of
the layers. Convergence to a free-boundary limit problem is proved
and a sharp estimate of the
thickness of the layers is provided.\\

\noindent{\underline{Key Words:}} density-dependent diffusion,
Fisher-KPP equation, chemotaxis, drift effect, singular
perturbation.\footnote{AMS Subject Classifications: 35K65, 35B25,
35R35, 92D25.}

\end{abstract}

%\tableofcontents

\section{Introduction}\label{intro-poreux}

In this paper we consider a Fisher-KPP equation with
density-dependent diffusion and advection, namely
\[
 \Pe \quad\begin{cases}
 u_t=\ep \Delta (u^m)-\n \cdot(u\n \ve)+\eun u(1-u) &\text{in }(0,\infty) \times \om\vspace{3pt}\\
 \di \frac{\partial (u^m)}{\partial \nu} = 0 &\text{on }(0,\infty)\times \partial \om  \vspace{3pt}\\
 u(0,x)=u_0(x) &\text{in }\om\,,
 \end{cases}
\]
with $\ep >0$ a small parameter and $\ve(t,x)$ a smooth given
function. Here $\om$ is a smooth bounded domain in $\R^N$ ($N\geq
2$), $\nu$ is the Euclidian unit normal vector exterior to
$\partial \om$ and $m\geq 2$. We are concerned with the behavior
of the solutions $\ue(t,x)$ as $\ep \to 0$.

\begin{assumption}[Initial data]\label{H2} Throughout this paper, we make the following assumptions on the
initial data.
\begin{itemize}
\item [(i)]Let $\Omega_0$ be a nonempty open bounded set with a
smooth boundary and such that  $\overline{\Omega_0}\subset
\Omega$. Let $\widetilde {u_0}:\overline{\Omega_0}\to \R$ be
$C^0$ in $\overline{\Omega_0}$ and $C^2$ in $\Omega_0$, strictly positive on
$\Omega_0$ and such that $\widetilde {u_0}(x)=0$ for all
$x\in\partial \Omega _0$. Define the map $u_0:\Omega \to \R$ by
\begin{equation*}
u_0(x):=\begin{cases} \widetilde {u_0}(x)&\text{ if } x\in\overline{\Omega_0}\\
0&\text{ if } x\in \Omega \setminus
\overline{\Omega_0}\,.\end{cases}
\end{equation*}
\item [(ii)] $\Omega _0$ is convex. \item [(iii)] there exists
$\delta
>0$ such that, if $n$ denotes the Euclidian unit normal vector
exterior to the ``initial interface" $\Gamma _0:=\partial
\Omega_0$, then
\begin{equation}\label{pente}
\left| \frac{\partial \widetilde{u_0}}{\partial n}(y)\right|\geq
\delta \quad\text{ for all } y\in \Gamma _0\,.
\end{equation}
\end{itemize}
\end{assumption}

\begin{rem}
Note that the comparison principle allows to relax the regularity
assumption on $\widetilde {u_0}$. See \cite{A-Duc} for details.
%More precisely, for $\widetilde{u_0}:\Omega _0 \to \R$ only
%bounded and positive on $\Omega _0$, it is enough to assume the
%existence of a map $u_0 ^-:\overline{\Omega_0}\to \R$ of the class
%$C^2$, positive on $\Omega _0$ and such that $u_0 ^-(x)=0$ for all
%$x\in\partial \Omega$, such that
%\begin{equation*}
%u_0 ^-\leq \widetilde {u_0}\quad\text{ on $\Omega_0$ }\quad\text{
%and }\quad \left| \frac{\partial u_0 ^-}{\partial n}(y)\right|\geq
%\delta \quad\text{ for all } y\in \Gamma _0=\partial\Omega_0\,,
%\end{equation*}
%for some $\delta >0$.
\end{rem}

\begin{assumption}[Structure of $\ve$]\label{H3} We assume that
\begin{equation}\label{vep}
\ve(t,x)=v(t,x)+\ep v^\ep _1 (t,x)\,,
\end{equation}
with $v$ and $v^\ep _1$ smooth functions on $[0,\infty) \times
\overline \Omega$. We assume that, for all $T>0$, there exists $C
>0$ such that, for all $\ep>0$ small enough, it holds that $\Vert
v^\ep _1 \Vert _{C^{1,2}([0,T]\times\overline \Omega)} \leq C$.
Finally we assume
\begin{equation}\label{neumann}
\di \frac{\partial v^\ep}{\partial \nu} = 0 \quad \text{ on
}(0,\infty)\times \partial \om\,.
\end{equation}
\end{assumption}

\begin{rem}\label{rem:extend} In the sequel we smoothly extend $v(t,x)$ in
time and space on the whole of $\R \times \R ^N$, as well as $v^\ep _1(t,x)$ in space
on $[0,\infty)\times \R^N$.  Moreover since we are investigating local in time phenomena we will
assume in the sequel, without loss of generality, that the extensions $v(t,x)$
and $v^\ep _1(t,x)$ vanish outside of a large time-space ball.
\end{rem}

Problem $\Pe$ is a simpler version of a chemotaxis-growth system
with logistic nonlinearity, where $\ve (t,x)$ is not a given
function but is coupled to $u$ either through the parabolic
equation
 $\ep v_t=\Delta v +u-\gamma v$ or through the
 elliptic equation $0=\Delta
v +u-\gamma v$,  supplemented
with the Neumann boundary condition \eqref{neumann} (see e.g.
\cite{Mim-Tsu}).  Note that, in the
case of linear diffusion (corresponding to $m=1$) and a bistable nonlinearity,
the asymptotic behavior of the
 corresponding system as $\ep \rightarrow 0$ has been studied using the Green's function
associated to the homogeneous Neumann boundary value problem on
$\om$ for the operator $-\Delta+ \gamma$ (see
\cite{Bon-Hil-Log-Mim1} and \cite{A-chemo}) .

\vskip 4pt \noindent {\bf Motivation and biological background.}
Before describing our results, let us briefly comment about  the
relevance of $\Pe$ in population dynamics models. The evolution
equation in Problem $\Pe$ combines logistic growth, chemotaxis and
degenerate diffusion. We recall below how these terms appear in
mathematical models
 that attempt to capture remarkable biological features.

Reaction diffusion equations with a logistic nonlinearity were
introduced in the pioneering works \cite{Fish}, \cite{Kol-Pet-Pis}.
The simplest equation reads
 $$u_t=\Delta u +u(1-u)\,,$$
and has been widely used to model phenomena arising in population
genetics \cite{Fish} or in biological invasions \cite{Shi-Kaw}.
Its main mathematical property is to sustain travelling wave
solutions with a semi-infinite interval of admissible wave speeds,
with  the minimal one having a crucial  biological interpretation.

Chemotaxis, i.e. the tendency of biological individuals to direct
their movements according to certain chemicals in their
environment, is induced in $\Pe$ by the advection term $-\n
\cdot(u\n \ve)$: the population, whose density is $u(t,x)$, has an
oriented motion in the direction of a positive gradient of the
chemotactic substance, whose concentration is $\ve(t,x)$. The
first PDE model to describe such movements was proposed in
\cite{Kel-Seg} and involves linear diffusion for $u$ and a
parabolic equation coupling $v$ to $u$. The Keller-Segel model has
received considerable attention in mathematical literature,
particularly focusing on the finite-time blow-up of solutions (see
\cite{hillen} for a recent review). This provides a mathematical
tool to analyze  aggregation phenomena as observed in bacteria
colonies. Chemotaxis systems involving linear diffusion and a
growth term, either logistic or bistable, have later been
considered in, e.g., \cite{Mim-Tsu}, \cite{Bon-Hil-Log-Mim1},
\cite{A-chemo} and \cite{winkler} .

Variants of the  Fisher-KPP equation involving a degenerate
diffusion have been proposed in order to take into account
 population density pressure. Actually one can introduce density-dependent birth or death rates as an attempt to
 control the size of  a population. Nevertheless as shown in \cite{GN}, the introduction of a
nonlinearity into the dispersal behavior of a species, which
behaves in an otherwise linear way, may lead, in an inhomogeneous
environment, to a similar regulatory effect. Moreover this assumption is consistent with ecological observations as
reported for instance in \cite{Carl}, where it is shown that arctic ground squirrels
migrate from densely populated areas into sparsely populated
areas, even when the latter is less favorable (due to reduced availability of burrow sites
 or exposure to intensive predation). For such species, migration to
avoid crowding, rather than random motion, is the primary cause of
dispersal. To describe such movements, the authors in  \cite{Shi-Kaw}  and \cite{GN}
use the directed motion model where individuals can only stay put
or move down the population gradient; this model yields the
degenerate equation
\begin{equation}\label{gurney}
u_t=\Delta (u^2)+G(x)u\,,
\end{equation}
in which the  population regulates its size below the carrying
capacity set by the supply of nutrients.
Later in \cite{GM} a larger class of equations with degenerate
diffusion and nonlinear reaction was considered, namely
\begin{equation}\label{general}
u_t=\Delta (u^m)+f(u)\,,\quad \quad m\geq 2\,.
\end{equation}
Note that in the absence of $f(u)$, equation \eqref{general} reduces to the
so-called porous medium equation
\begin{equation}\label{pme}
u_t=\Delta (u^m)\,,
\end{equation}
which has been extensively investigated in the literature. We
refer to the book \cite{V} and the references therein. The main
feature of this equation is that it is degenerate at the points
where $u=0$. As a consequence, a loss of regularity of solutions
occurs and disturbances propagate with finite speed, a property
which has a relevant interpretation in a biological context (see
for instance \cite{reb}).

\vskip 4pt \noindent {\bf Formal asymptotic analysis.} Problem
$\Pe$ possesses a unique solution $\ue(t,x)$ in a sense that is
explained in  Section \ref{s:existence}. As $\ep \rightarrow 0$,
the qualitative behavior of this solution is the following. In the
very early stage, the nonlinear diffusion term $\ep \Delta (u^m)$
is negligible compared with the drift term $-\n u\cdot \n \ve$ and
the reaction term $\ep^{-1} u(1-u)$. Hence, in some sense, the
equation is well approximated by a coupling between the transport
equation $u_t+\n u\cdot \n \ve=0$ and the ordinary differential
equation $u_t=\ep ^{-1} u(1-u)$. Therefore, as suggested by the
analysis in \cite{A-Duc}, $\ue$ quickly approaches the values $0$
or $1$, and an interface is formed between the regions
$\{\ue\approx 0\}$ and $\{\ue\approx 1\}$ ({\it emergence of the
layers}). Note that, in this very early stage, the balance of the
transport equation and the ordinary differential equation will
generate an interface not exactly around $\Gamma _ 0$ but in a
slightly drifted place. Once such an interface is developed, the
diffusion term becomes large near the interface, and comes to
balance with the drift and the reaction terms so that the
interface starts to propagate, on a much slower time scale ({\it
propagation of the front}).

\vsp Our goal in this paper is to provide a rigorous analysis that supports this
formal approach and makes it more precise. To study the interfacial behavior, we consider
the asymptotic limit of $\Pe$ as $\ep\rightarrow 0$. Then the
limit solution will be a step function $\tilde u (t,x)$ taking the
value $1$ on one side of a moving interface, and $0$ on the other
side. We show that this sharp interface, which we will denote by $\Gamma_t$,
obeys the law of motion
\[
 \Pz\quad\begin{cases}
 \, V_{n}=c^*+\di \frac{\partial v}{\partial n}
 \quad \text { on } \Gamma_t \vspace{3pt}\\
 \, \Gamma_t\big|_{t=0}= \Gamma_0\,,
\end{cases}
\]
where $V_n$ is the normal velocity of $\Gamma _t$ in the exterior
direction, $c^*$ the minimal speed of travelling waves solutions
of a related degenerate one-dimensional problem (see Section
\ref{s:motion} for details) and $n$ the outward normal vector on
$\Gamma _t$.

\vskip 4pt\noindent {\bf Plan.} The organization of this paper is
as follows. We present our results in Section \ref{s:results}. In
Section \ref{s:existence}, we briefly recall known results
concerning the well-posedness of Problem $\Pe$; in particular, it
admits a comparison principle so that the sub- and super-solutions
method can be used to investigate the behavior of the solutions
$\ue$. In Section \ref{s:generation}, we prove a generation of
interface property for Problem $\Pe$. In Section \ref{s:motion} we
investigate the motion of interface. Finally, we prove our main
result in Section \ref{s:proof}.

\section{Results and comments}\label{s:results}

The question of the convergence of Problem $\Pe$ to $\Pz$ has been
addressed in \cite{Dkh}. However, the author considers only a very
restricted class of initial data, namely those having a specific
profile with well-developed transition layers. In other words the
generation of interface from arbitrary initial data is not
studied. In the present paper we study both the emergence and the
propagation of interface. Moreover we prove a sharp $\mathcal
O(\ep)$ estimate of the thickness of the transition layers of the
solutions $\ue$.

The authors in \cite{HKLM} prove the convergence of the solutions
of $\Pe$  with arbitrary initial data with convex compact support
to solutions of $\Pz$, when there is no advection (i.e. $\ve
\equiv 0$). They provide an $\mathcal O (\ep|\ln \ep|)$ estimate
of the thickness of the transition layers. Therefore, even in the
particular case $\ve \equiv 0$, our $\mathcal O (\ep)$ estimate
was not known.

\vskip 4pt %Let us recall that the sharp interface limit of $u_t=\Delta
%u-\n\cdot(u\n \ve)+\edeux f(u)$ with $f$ of the bistable type is
%studied in \cite{A-chemo}: in the very early stage the reaction
%term is dominant and the generation of interface is initiated by
%the dynamics of the ODE $u_t=\edeux f(u)$.
As mentioned in the introduction, the drift term and the reaction
term in $\Pe$ are of the same magnitude for small times. Therefore
the emergence of the layers, initiated by the ODE $u_t=\ep ^{-1}
u(1-u)$, will occur in the neighborhood of a slightly drifted
initial interface $\intdrift$. To analyze such a phenomenon we
shall use the Lagrangian coordinates. Recall that we have smoothly
extended $v(t,x)$ in time-space on the whole of $\R \times \R ^N
$, with $v\equiv 0$ outside of a large time-space ball. Then, for
$(t_0,x_0) \in \R \times \R ^N$, we denote by
$\varphi_{(t_0,x_0)}$ the solution, defined on $\R$, of the Cauchy
problem
\begin{equation}\label{lagrange}
\begin{cases}
\di \frac {d X}{d t}(t)=\n v \left(t,X(t)\right)\,,\vsp\\
X(t_0)=x_0\,.
\end{cases}
\end{equation}
We denote by $\Phi$ the associated flow defined on $\R \times \R
\times \R ^N$, that is
\begin{equation}\label{flow}
\Phi (t_1,t_2,x_3):=\varphi_{(t_2,x_3)}(t_1)\,.
\end{equation}
Recall that $\Gamma _0 =\partial \Omega _0=\partial ({\rm Supp}\;
u_0)$ is the initial interface. From $t=0$ to
\begin{equation}\label{time}
t^\ep:=\ep|\ln \ep|\quad\ \hbox{(generation time)}\,,
\end{equation}
we let each point on $\Gamma _0$ evolve with the law
\eqref{lagrange} and then define a {\it drifted initial interface}
$\intdrift$ by
\begin{equation}\label{drifted-interface}
\intdrift:=\{\Phi(t^\ep,0,x):\, x\in \Gamma _0\}\,.
\end{equation}
Next we consider the free boundary problem
\[
 \drift \quad\begin{cases}
 \, V_{n}=c^*+\di \frac{\partial v}{\partial n}
 \quad \text { on } \Gammadrift _t \vspace{3pt}\\
 \, \Gammadrift _t\big|_{t=0}= \intdrift.
\end{cases}
\]

\vskip 4pt \noindent{\bf Well-posedness of $\Pz$ and of $
\drift$.} Using the level set formulation (see, e.g., \cite{bss}),
the motion law in Problem $\Pz$ can be rewritten as a first order
Hamilton-Jacobi equation with a convex Hamiltonian. This approach,
combined with the results in \cite{ley}, has been used in
\cite{Dkh} in order to prove the following.

\begin{thm}[\cite{Dkh}, Well-posedness of $\Pz$] Let $\Omega_0 \subset \subset \Omega$ be a smooth subdomain
of $\Omega$ and let
 $\Gamma _0= \partial \Omega_0$
be the given smooth initial interface. Then there exists
$T^{max}(\Gamma _0)>0$ such that Problem $\Pz$ has a unique smooth
solution on $[0,T]$ for any $0<T<T^{max}(\Gamma_0)$. More
precisely, there exists a family of smooth subdomains $(\Omega_
t)_{t \in (0,T]}$ with
 $\Omega_ t \subset \subset \Omega$  such that, denoting $\Gamma_t  =\partial \Omega_ t$,
 $\Gamma
:=\bigcup _{0\leq t \leq T} (\{t\}\times \Gamma_t)$ is the unique
solution to Problem $\Pz$ on $[0,T]$.
\end{thm}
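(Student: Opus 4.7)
The plan is to recast the geometric motion law as a first-order Hamilton–Jacobi (HJ) equation via the level set method, solve it in the viscosity sense with a suitably chosen initial datum, and then transfer a short-time regularity theory from \cite{ley} to obtain that the zero level set is a smooth hypersurface $\Gamma_t$ bounding a smooth domain $\Omega_t\subset\subset\Omega$ on a maximal interval.

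First, I would choose as initial datum $\phi_0$ a smooth (e.g.\ signed-distance-like) function satisfying $\phi_0<0$ in $\Omega_0$, $\phi_0>0$ in $\Omega\setminus\overline{\Omega_0}$, $\phi_0=0$ on $\Gamma_0$, and $|\n\phi_0|\geq c>0$ in a neighborhood of $\Gamma_0$. Using the standard identities $V_n=-\phi_t/|\n\phi|$ and $n=\n\phi/|\n\phi|$ on the zero level set, the motion law $V_n=c^*+\partial v/\partial n$ translates into
\begin{equation*}
\phi_t + c^*|\n\phi| + \n v(t,x)\cdot \n\phi \;=\; 0\quad\text{in }(0,\infty)\times \R^N,\qquad \phi(0,\cdot)=\phi_0.
\end{equation*}
The Hamiltonian $H(t,x,p):=c^*|p|+\n v(t,x)\cdot p$ is convex (indeed positively $1$-homogeneous) in $p$, globally Lipschitz in $x$ (since $v$ is smooth with bounded derivatives, cf.\ Remark~\ref{rem:extend}), and continuous in $t$. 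Classical Crandall–Lions theory then yields existence and uniqueness of a global Lipschitz viscosity solution $\phi(t,x)$.

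Second, I would exploit the fact that the HJ equation is \emph{geometric}: any level set of $\phi$ is intrinsically determined by the motion law and independent of the particular choice of $\phi_0$. This is where the results of \cite{ley} enter: given that $\Gamma_0$ is a smooth, compact hypersurface and that the Hamiltonian is convex and smooth in $(x,p)$ away from $p=0$, one obtains that, on a short time interval, the zero level set $\Gamma_t:=\{\phi(t,\cdot)=0\}$ is the image of $\Gamma_0$ under a smooth flow (essentially the characteristic flow of the HJ equation, i.e.\ $\dot X=c^* n(t,X)+\n v(t,X)$), hence a smooth compact hypersurface, and $\{\phi(t,\cdot)<0\}$ defines a smooth open set $\Omega_t\subset\subset\Omega$ as long as $\Omega_t$ does not reach $\partial\Omega$. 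Setting
\begin{equation*}
T^{\max}(\Gamma_0):=\sup\bigl\{T>0:\,\Gamma_t\text{ is smooth and }\Omega_t\subset\subset\Omega\text{ for all }t\in[0,T]\bigr\}
\end{equation*}
gives the claimed existence time. Uniqueness of the smooth solution follows from uniqueness of the viscosity solution combined with the geometric invariance.

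The main technical obstacle is the preservation of smoothness: a priori, viscosity solutions of HJ equations develop kinks, and the zero level set could acquire singularities (fattening, corner formation) even before the characteristics cross. Overcoming this is precisely the content of the regularity machinery in \cite{ley}, which gives that, on a short time interval, no fattening occurs and $\Gamma_t$ is transported by the characteristic ODE with velocity $c^* n+\n v$, starting from the smooth datum $\Gamma_0$. The maximal time $T^{\max}$ is reached either when characteristics first focus (curvature blow-up) or when $\Omega_t$ touches $\partial\Omega$; everything before that is controlled by the smoothness of $v$ and of $\Gamma_0$ through standard ODE/geometric estimates.
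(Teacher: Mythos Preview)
Your proposal is correct and follows essentially the same route the paper indicates: the theorem is quoted from \cite{Dkh}, and the paper summarizes that proof as recasting the motion law $V_n=c^*+\partial v/\partial n$ via the level set formulation \cite{bss} into a first-order Hamilton--Jacobi equation with convex Hamiltonian, then appealing to the regularity results of \cite{ley}. Your write-up expands exactly this outline, with the correct Hamiltonian $H(t,x,p)=c^*|p|+\n v(t,x)\cdot p$ and the correct identification of $T^{\max}$.
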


Moreover, $T^{max}(\Gamma _0)$ depends smoothly on $\Gamma _0$.
 Therefore we can choose  $\ep_0 >0$
small enough and $T>0$ such that
\begin{equation}\label{choixT}
0< T < \inf_{0\leq \ep \leq \ep_0} T^{max}(\intdrift)\,,
\end{equation}
which guarantees the existence of a unique smooth solution on
$[0,T]$ to both $\Pz$ and $ \drift$ for any $0<\ep\leq\ep_0$. We
denote by $\Gammadrift =\bigcup _{0\leq t \leq T} (\{t\}\times
\Gammadrift _t)$ the smooth solution to $\drift$ and by
$\Omegadrift _t$ the region enclosed by $\Gammadrift _t$. In the
sequel we work on $[0,T]$, with $T$ satisfying (\ref{choixT}), and
define $\Q:=(0,T)\times \Omega$.

\vskip 4pt Our main result, Theorem \ref{th:results}, contains
generation, motion and thickness of the transition layers
properties. It asserts that: given an initial data $u_0$, the
solution $\ue$ quickly (at time $t^\ep=\ep|\ln \ep|$) becomes
close to 1 or 0, except in a small neighborhood of the drifted
interface $\Gammadrift _{t^\ep}$, creating a steep transition
layer around $\Gammadrift _{t^\ep}$ ({\it generation of
interface}). The theorem then states that the solution $\ue$
remains close to the step function associated with $\drift$ on the
time interval $[t^\ep,T]$ ({\it motion of interface}); in other
words, the motion of the transition layer is well approximated by
the limit interface equation $\drift$. Moreover, the estimate
\eqref{resultat} in Theorem \ref{th:results} implies that, once a
transition layer is formed, its thickness remains within order
$\mathcal O(\ep)$ for the rest of time.

\begin{thm}[Generation, motion and thickness of the layers]\label{th:results}
Let $\eta \in (0,1/2)$ be arbitrary. Then, there exists $\mathcal
C
>0$ such that, for all $\ep
>0$ small enough and all
$$
t^\ep=\ep  |\ln \ep|\leq t \leq T\,,
$$
we have
\begin{equation}\label{resultat} \ue(t,x) \in
\begin{cases}
\,[0,1+\eta]\quad&\text{if}\quad
x\in\mathcal N_{\mathcal C\ep}(\Gammadrift _t)\\
\,[1-\eta,1+\eta]\quad&\text{if}\quad x\in\Omegadrift  _
t\setminus\mathcal N_{\mathcal C\ep}(\Gammadrift
_t)\\
\,\{0\}\quad&\text{if}\quad x\in (\Omega \setminus
\overline{\Omegadrift _t})\setminus\mathcal N_{\mathcal
C\ep}(\Gammadrift _t)\,,
\end{cases}
\end{equation}
with $\mathcal N _r(\Gammadrift  _t):=\{x:\,{\rm
dist}(x,\Gammadrift
 _t)<r\}$ the tubular $r$-neighborhood of $\Gammadrift
 _t$.
\end{thm}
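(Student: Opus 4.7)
The plan is to combine the generation-of-interface result (from Section~\ref{s:generation}) and the motion-of-interface analysis (from Section~\ref{s:motion}) in a single sub- and super-solution argument, matched at the generation time $t^\ep = \ep|\ln\ep|$. Throughout, I would rely on the comparison principle for $\Pe$ (Section~\ref{s:existence}).

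\textbf{Step 1: Generation phase $[0,t^\ep]$.} During this short interval the nonlinear diffusion $\ep\Delta(u^m)$ is formally negligible compared to the drift $-\nabla\cdot(u\nabla v^\ep)$ and the reaction $\ep^{-1}u(1-u)$, so the equation is well approximated by coupling the transport equation $u_t+\nabla u\cdot\nabla v=0$ (whose characteristics are given by the Lagrangian flow $\Phi$ of \eqref{flow}) with the logistic ODE $u_t=\ep^{-1}u(1-u)$. I would build sub- and super-solutions by pulling back this ODE solution along the flow $\Phi$, with lower-order corrections absorbing the contributions of the diffusion and of $v^\ep_1$. At time $t^\ep$ the logistic ODE has driven values strictly away from $\tfrac12$ to within $\mathcal O(\ep)$ of either $0$ or $1$, while the initial interface $\Gamma_0$ has been transported to the drifted interface $\Gammadrift_{t^\ep} = \Phi(t^\ep,0,\Gamma_0)$ up to an $\mathcal O(\ep)$ error. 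I expect to conclude at $t=t^\ep$ a pointwise sandwich of the form
\begin{equation*}
U^-(x) \leq \ue(t^\ep,x) \leq U^+(x), \quad x\in\Omega,
\end{equation*}
where $U^\pm$ are profiles close to the indicator of $\Omegadrift_{t^\ep}$, with $U^-$ compactly supported strictly inside $\Omegadrift_{t^\ep}$ shifted inward by $\mathcal O(\ep)$, and $U^+$ supported in $\Omegadrift_{t^\ep}$ shifted outward by $\mathcal O(\ep)$.

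\textbf{Step 2: Motion phase $[t^\ep,T]$.} Here I would construct sub- and super-solutions of the form
\begin{equation*}
u^\pm(t,x) = U^\pm_{c^*\pm\mathcal O(\ep)}\!\left(\frac{d^\pm(t,x)}{\ep}\right),
\end{equation*}
where $d^\pm$ is a signed-distance-type function to a slightly perturbed interface $\Gammadrift_t$ (shifted by $\pm\mathcal O(\ep)$ inward/outward), and $U^\pm_c(\cdot)$ denotes a one-dimensional travelling wave profile of the degenerate Fisher--KPP equation with speed close to the minimal speed $c^*$ (as reviewed in Section~\ref{s:motion}). Plugging this ansatz into $\Pe$ and exploiting the TW equation satisfied by $U^\pm_c$, together with the motion law $V_n = c^* + \partial v/\partial n$ for $\Gammadrift_t$, the leading-order terms cancel and the residual is controlled by the $\mathcal O(\ep)$ perturbations in speed and shift. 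The compatibility at $t=t^\ep$ with the initial sandwich from Step~1 is arranged by choosing the shift constants large enough (but still $\mathcal O(\ep)$).

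\textbf{Step 3: Conclusion.} The comparison principle gives $u^-(t,x)\leq \ue(t,x)\leq u^+(t,x)$ on $[t^\ep,T]\times\Omega$. Away from $\Gammadrift_t$ by a distance $\mathcal C\ep$, the TW profiles $U^\pm$ are within $\eta$ of their end states $\{0,1\}$, yielding the two nontrivial estimates of \eqref{resultat}. The third case $\ue\equiv 0$ on $(\Omega\setminus\overline{\Omegadrift_t})\setminus\mathcal N_{\mathcal C\ep}(\Gammadrift_t)$ uses the finite-speed-of-propagation property of the porous medium operator: the degenerate TW super-solution $u^+$ actually has compact support contained in a tubular $\mathcal O(\ep)$ neighborhood of $\overline{\Omegadrift_t}$, so $\ue$ vanishes identically outside it.

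\textbf{Main obstacle.} The hardest part will be the construction in Step~2, because $m\geq 2$ makes \eqref{general} degenerate at $u=0$: the travelling waves lack smoothness at the free boundary, so the usual calculus used for linear diffusion breaks down at the edge of the support. One must use the explicit sharp TW profile of the degenerate Fisher--KPP equation and do the sub/super-solution computation carefully in the weak viscosity/distributional sense near the free boundary, while simultaneously keeping the shifts at order exactly $\ep$ (and not $\ep|\ln\ep|$) to obtain the claimed sharp $\mathcal O(\ep)$ thickness. A secondary difficulty is ensuring compatibility at $t=t^\ep$ between the generation-phase sandwich $U^\pm$ and the motion-phase TW sandwich $u^\pm(t^\ep,\cdot)$, which requires that the generation step yield not only closeness to $0$ or $1$ but also a steepness estimate sufficient to be bounded by a shifted TW profile.
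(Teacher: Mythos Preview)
Your overall two-phase strategy (generation on $[0,t^\ep]$, motion on $[t^\ep,T]$, matched via the comparison principle) is exactly the paper's architecture, and your Steps~1 and~3 are in line with what is actually done. The gap is in Step~2. You propose sub- and super-solutions built from travelling waves $U_{c^*\pm\mathcal O(\ep)}$ at speeds \emph{perturbed around} the minimal speed $c^*$. In the degenerate setting $m\geq 2$ this breaks on both sides. For the sub-solution, there is simply no wave with speed below $c^*$, so $U_{c^*-\mathcal O(\ep)}$ does not exist. For the super-solution, waves with speed $c>c^*$ do exist, but they are strictly positive on all of $\R$: only the minimal-speed wave is sharp and compactly supported on one side (cf.\ Lemma~\ref{behavior-tw}). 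A super-solution built from $U_{c^*+\mathcal O(\ep)}$ would therefore not vanish outside a neighborhood of $\Omegadrift_t$, and your argument for the third line of \eqref{resultat} --- which you correctly identify as relying on the compact support of $u^+$ --- collapses.

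The paper resolves this by using the \emph{same} minimal-speed wave $U$ for both sub- and super-solutions, and placing the perturbation elsewhere: a multiplicative prefactor $(1\pm q(t))$ together with a time-dependent shift $\ep p(t)$ of the argument, namely
\[
u_\ep^\pm(t,x)=(1\pm q(t))\,U\!\left(\frac{d^\ep(t,x)\mp \ep p(t)}{\ep}\right),
\]
with $p(t)=-e^{-t/\ep}+e^{Lt}+K$ and $q(t)=\sigma(e^{-t/\ep}+\ep L e^{Lt})$. The fast term $e^{-t/\ep}$ absorbs the matching defect at $t=t^\ep$, while the slow term $e^{Lt}$ absorbs the $\mathcal O(\ep)$ geometric and drift errors. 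The algebraic point is that expanding $(1\pm q)^m$ against the reaction $U(1-U)$ produces a sign-definite remainder of size $\sim q$, which plays the role that a speed perturbation would play in the non-degenerate case, while keeping the super-solution compactly supported. This is what delivers both the exact vanishing outside $\Omegadrift_t$ and the sharp $\mathcal O(\ep)$ thickness.
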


Note that \eqref{resultat} shows that, for any $0<a<1$, for all
$t^\ep\leq t \leq T$, the $a$-level set $$ L _t ^\ep (a):=\{x:\,
\ue(t,x)=a\}$$ lives in a tubular $\mathcal O (\ep)$ neighborhood
of the  interface $\Gammadrift _t$. In other words, we provide a
new $\mathcal O(\ep)$ estimate of the thickness of the transition
layers of the solutions $\ue$. Concerning the localization of the
level sets $L _t ^\ep (a)$, it is made with respect to a slightly
drifted free boundary problem $\drift$. Nevertheless, since the
solution of $\drift$ on $[0,T]$ is continuous w.r.t. the initial
hypersurface $\intdrift$, we recover, as $\ep \to 0$, the original
free boundary problem $\Pz$ and obtain the expected result. More
precisely, let us define the step function $\tilde u (t,x)$ by
\begin{equation}\label{u}
\tilde u  (t,x):=\begin{cases}
\, 1 &\text{ in } \Omega  _t\\
\, 0 &\text{ in } \Omega \setminus \overline{\Omega  _t}
\end{cases} \quad\text{for } t\in(0,T]\,.
\end{equation}
 As a consequence of Theorem \ref{th:results},
we obtain the following convergence result which shows that $\tilde u$ is the
sharp interface limit of $\ue$ as
$\ep\to 0$.
\begin{cor}[Convergence]\label{cor:cv} As $\ep\to 0$, $\ue$ converges to
$\tilde u $, defined in \eqref{u}, everywhere in $\bigcup _{0<
t\leq T}(\{t\}\times \Omega _t)$ and $\bigcup _{0< t \leq T}\left(
\{ t\}\times(\Omega \setminus \overline{\Omega  _t})\right)$.
\end{cor}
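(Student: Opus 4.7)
The plan is to deduce pointwise convergence from Theorem \ref{th:results} by a continuity argument that compares the drifted free boundary $\Gammadrift_t$ with the limit interface $\Gamma_t$ given by $\Pz$.

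First I would check that $\intdrift \to \Gamma_0$ as $\ep \to 0$. Since $v$ is smooth with bounded derivatives, the flow $\Phi$ is Lipschitz on bounded sets, so
\[
\sup_{x \in \Gamma_0} |\Phi(t^\ep,0,x) - x| \leq C\, t^\ep = C\ep|\ln\ep| \xrightarrow[\ep\to 0]{} 0,
\]
which gives convergence of $\intdrift$ to $\Gamma_0$ in the Hausdorff sense (and, in fact, in a smooth sense, since $\Phi(t^\ep,0,\cdot) \to \mathrm{id}$ in $C^k$ for any $k$). Next I invoke the continuous dependence on the initial hypersurface for Problem $\drift$ (mentioned in the excerpt after the well-posedness statement): since $\drift$ and $\Pz$ share the same motion law and only differ in their initial data, letting $\ep \to 0$ yields $\Gammadrift_t \to \Gamma_t$ and, as a consequence, $\Omegadrift_t \to \Omega_t$ in the Hausdorff sense, uniformly on compact subsets of $(0,T]$.

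With this in hand, fix $t_0 \in (0,T]$ and $x_0 \in \Omega_{t_0}$. By the continuity just recalled, there exists $\ep_1>0$ such that for all $0<\ep\leq \ep_1$ one has
\[
\mathrm{dist}(x_0,\Gammadrift_{t_0}) \geq \tfrac{1}{2}\mathrm{dist}(x_0,\Gamma_{t_0}) > 0
\quad\text{and}\quad x_0 \in \Omegadrift_{t_0}.
\]
In particular, for $\ep$ small enough, $x_0 \in \Omegadrift_{t_0}\setminus \mathcal N_{\mathcal C\ep}(\Gammadrift_{t_0})$, and since $t^\ep \to 0$ we have $t^\ep \leq t_0$ for $\ep$ small. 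Theorem \ref{th:results}, applied with an arbitrary $\eta \in (0,1/2)$, then gives $\ue(t_0,x_0) \in [1-\eta, 1+\eta]$. As $\eta$ is arbitrary, $\ue(t_0,x_0) \to 1 = \tilde u(t_0,x_0)$. The symmetric case $x_0 \in \Omega \setminus \overline{\Omega_{t_0}}$ is handled identically: for $\ep$ small enough, $x_0 \in (\Omega\setminus\overline{\Omegadrift_{t_0}})\setminus \mathcal N_{\mathcal C\ep}(\Gammadrift_{t_0})$, so the third alternative in \eqref{resultat} yields $\ue(t_0,x_0)=0=\tilde u(t_0,x_0)$.

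The only real obstacle is the continuous dependence $\intdrift \to \Gamma_0 \Rightarrow \Gammadrift \to \Gamma$, which is not proved in detail in the excerpt; however, since both free boundary problems are governed by the same Hamilton-Jacobi level-set equation with the same convex Hamiltonian, the standard stability theory for viscosity solutions (as used in \cite{Dkh} following \cite{ley}) yields uniform convergence of the corresponding signed distance functions, hence Hausdorff convergence of $\Gammadrift_t$ to $\Gamma_t$ uniformly on $[0,T]$. Everything else is a routine selection of $\ep$ small enough so that $x_0$ lies on the correct side of $\Gammadrift_{t_0}$ and outside the $\mathcal O(\ep)$ tubular neighborhood.
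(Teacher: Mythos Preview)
Your argument is correct and follows essentially the same route as the paper. The paper disposes of the corollary in one line inside the proof of Theorem \ref{th:results}: from the sandwich $u_\ep^-(t,x)\leq \ue(t+t^\ep,x)\leq u_\ep^+(t,x)$ (equation \eqref{ok}) and the pointwise limits of $u_\ep^\pm$ in \eqref{sub-lim}, together with the continuous dependence $\Gammadrift_t\to\Gamma_t$ already noted in Section \ref{s:results}, the convergence follows. You instead invoke the statement of Theorem \ref{th:results} with arbitrary $\eta$ rather than the underlying sub-/super-solutions, but the substance---continuous dependence of the drifted free boundary on its initial hypersurface, then placing the fixed point $(t_0,x_0)$ on the correct side of $\Gammadrift_{t_0}$ for $\ep$ small---is identical.
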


%\begin{rem}[When the drift is small]Assume that in \eqref{vep} we
%have $v(t,x)=v(t)$ so that $v^\ep (t,x)= v(t)+\mathcal O (\ep)$.
%Then, for small times, the drift does not occur and in the above
%results $\intdrift$, $\drift$, $\Gammadrift _t$, $\Omegadrift _t$
%etc are replaced by $\Gamma _ 0$, $\Pz$, $\Gamma _t$, $\Omega _t$
%etc. Therefore we can localize the $a$-level sets of $\ue$ with
%respect to the \lq\lq correct" limit free boundary problem $\Pz$.
%Note again that our $\mathcal O (\ep)$ estimate of the thickness
%of the transition layers improves the $\mathcal O (\ep |\ln \ep|)$
%proved in \cite{HKLM} when $\ve \equiv 0$.
%\end{rem}

\section{Comparison principle, well-posedness for $\Pe$}\label{s:existence}

Since the diffusion term degenerates when $u=0$ a loss of
regularity of solutions occurs. We define below a notion of weak
solution for Problem $\Pe$, which is very similar to the one
proposed in \cite{ACP} for the one dimensional problem with
homogeneous Dirichlet boundary conditions. Concerning the initial
data, we suppose here that $u_0 \in L^\infty (\om)$ and $u_0 \geq
0$ a.e. Note that in this section, and only in this section, we
assume, for ease of notation, that $\ep=1$ and that $v^\ep \equiv
v$; we then denote the associated Problem $\Pe$ by $(P)$. In the
sequel $f(u)=u(1-u)$.
%\[
% (P) \quad\begin{cases}
% u_t=\Delta (u^m)+f(u) &\text{in }\om \times (0,\infty)\vspace{3pt}\\
% \di \frac{\partial (u^m)}{\partial \nu} = 0 &\text{on }\partial \om \times (0,\infty)\vspace{3pt}\\
% u(0,x)=u_0(x) &\text{in }\om\,,
% \end{cases}
%\]
%with $f(u):=u(1-u)$.

\begin{defi}\label{definition-weaksol-poreux}
A function $u:[0,\infty)\to L^1(\om)$ is a solution of Problem
$(P)$ if, for all $T>0$,
\begin{enumerate}
\item [(i)]$u \in C\left([0,\infty);L^1(\om)\right)\cap L^\infty
(\Q)$
  \item [(ii)] for all $\varphi \in C^2(\overline {\Q})$ such that $\varphi \geq
  0$ and $\di \frac {\partial \varphi}{\partial \nu}=0$ on $\partial
  \om$, it holds that
\begin{eqnarray}
\int_ \om u(T)\varphi(T)-\int\int_{\Q}(u\varphi _t+u^m\Delta
\varphi+u\n v \cdot \n \varphi)\nonumber\\
=\int _\om u_0\varphi(0)+\int\int _{\Q}
f(u)\varphi\,.\label{deqexi-poreux}
\end{eqnarray}
\end{enumerate}
A sub-solution (a super-solution) of Problem $(P)$ is a function
satisfying $(i)$ and $(ii)$ with equality replaced by $\leq$
(respectively $\geq$).
\end{defi}

\begin{thm}[Existence and
comparison principle]\label{Existence-comparison}Let $T>0$ be
arbitrary. The following properties hold.
\begin{enumerate}
 \item [(i)] Let $u^-$ ($u^+$) be a sub-solution
(respectively a super-solution) with initial data $u_0^-$
(respectively $u_0^+$).
$$
\text{ If }\quad u_0^- \leq u_0^+ \,\text{ a.e.} \quad\text{ then
}\quad u^-\leq u^+ \,\text{ in $Q_T$;}
$$
 \item [(ii)] Problem $(P)$ has a unique solution $u$ on
$[0,\infty)$ and
\begin{equation}\label{encadrement}
0\leq u \leq \max(1,\Vert u_0 \Vert _{L^\infty(\om)})\, \text{ in
$Q_T$;}
\end{equation}
\item [(iii)] $u \in C(\overline{Q_T})$.
\end{enumerate}
\end{thm}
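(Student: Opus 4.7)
The plan is to construct a solution via vanishing viscosity, prove comparison (and hence uniqueness) through a duality argument in the spirit of Aronson--B\'enilan--Crandall, and invoke regularity theory for degenerate parabolic equations of porous-medium type to obtain continuity up to $\overline{\Q}$.

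\textbf{Existence.} Smooth the initial data to $u_0^\de\in C^2(\overline\om)$ with $\de\leq u_0^\de\leq \Vert u_0\Vert_{L^\infty}+\de$ satisfying the Neumann compatibility condition, and replace the degenerate diffusion by $\Delta(u^m+\de u)$. The resulting problem is uniformly parabolic and quasilinear, so classical theory furnishes a unique smooth $u^\de$. Since $f(0)=f(1)=0$, comparison with the constants $0$ and $\max(1,\Vert u_0\Vert_{L^\infty}+\de)$ yields a uniform $L^\infty$ bound. Standard energy estimates give an $L^2(\Q)$ bound on $\n((u^\de)^m)$, and an $L^1$-contraction estimate (a byproduct of the duality argument sketched next) provides equicontinuity in time in $L^1(\om)$. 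Compactness then supplies a subsequential limit $u\in C([0,T];L^1(\om))$ for which one passes to the limit in the weak formulation \eqref{deqexi-poreux}, yielding a solution satisfying \eqref{encadrement}.

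\textbf{Comparison and uniqueness.} Subtracting the sub- and super-solution inequalities and introducing
\[
a(t,x):=\int_0^1 m\bigl[s u^+ +(1-s)u^-\bigr]^{m-1}\!ds\geq 0,\quad b(t,x):=\int_0^1 f'\bigl(s u^+ +(1-s)u^-\bigr)ds,
\]
the difference $w:=u^--u^+$ satisfies, for every smooth $\varphi\geq 0$ with $\varphi(T)=0$ and $\partial_\nu \varphi=0$,
\[
-\iint_{\Q} w\bigl(\varphi_t+a\,\Delta\varphi+\n v\cdot\n\varphi-b\varphi\bigr)\leq \int_\om (u_0^--u_0^+)\,\varphi(0).
\]
Given a nonnegative $\psi\in C^\infty(\overline{\Q})$, mollify and truncate $a$ into $a_\eta\geq\eta>0$ and solve the non-degenerate backward adjoint problem $\varphi_t+a_\eta\Delta\varphi+\n v\cdot\n\varphi-b\varphi=-\psi$ with $\varphi(T)=0$ and Neumann data, obtaining a smooth $\varphi_\eta\geq 0$ (by the maximum principle for the time-reversed equation). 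Inserting $\varphi_\eta$ leaves the residual $\iint(a-a_\eta)(\Delta\varphi_\eta)\,w$; the key energy identity, obtained by testing the adjoint equation against $\Delta\varphi_\eta$, controls $\Vert\sqrt{a_\eta}\,\Delta\varphi_\eta\Vert_{L^2(\Q)}$ uniformly in $\eta$, so the residual vanishes as $\eta\to 0$. Since $u_0^-\leq u_0^+$, this yields $\iint w\psi\leq 0$ for every $\psi\geq 0$, hence $u^-\leq u^+$. Uniqueness is the specialization $u^-=u^+$, and the $L^1$-contraction used in the existence step follows likewise with $\psi$ concentrating.

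\textbf{Continuity and main obstacle.} The inclusion $u\in C(\overline{\Q})$ follows from the H\"older regularity theory of DiBenedetto for degenerate parabolic equations of porous-medium type, the drift $-\n\cdot(u\n v)$ and the bounded reaction $f(u)$ being treated as lower-order perturbations compatible with that theory; the Neumann condition is handled by standard reflection. The principal obstacle throughout is the degeneracy of the coefficient $a$ on the common vanishing set $\{u^-=u^+=0\}$, which rules out a direct maximum principle and forces the duality construction together with the delicate $\eta$-uniform energy bound on $\sqrt{a_\eta}\,\Delta\varphi_\eta$; keeping the Neumann compatibility throughout the adjoint regularization requires care but is classical.
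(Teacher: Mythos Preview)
Your approach is correct and coincides with the paper's: the paper gives no detailed proof but simply states that the argument ``is standard and follows the same steps of that of \cite[Theorem 5]{ACP}'' (Aronson--Crandall--Peletier) and that ``the continuity of $u$ follows from \cite{DB}'' (DiBenedetto). Your vanishing-viscosity construction, the duality comparison via the regularized backward adjoint problem with the $\eta$-uniform bound on $\sqrt{a_\eta}\,\Delta\varphi_\eta$, and the appeal to DiBenedetto's H\"older theory are exactly the ACP scheme the paper defers to, with the drift term and Neumann condition handled as routine perturbations.
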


Since \eqref{neumann} holds, the proof of Theorem
\ref{Existence-comparison} is standard and follows the same steps
of that of \cite[Theorem 5]{ACP}. The continuity of $u$ follows
from \cite{DB}.

\vskip 4pt The following lemma proved in \cite{HKLM}, will be very
useful when constructing smooth sub- and super-solutions in later
sections.

\begin{lem}[Being sub- and super-solutions]\label{lemma-sup-poreux}
Let $u$ be a continuous nonnegative function in $\overline {\Q}$.
Define $$\support:=\{x\in\om:\, u(t,x)>0\}\qquad
\gammasupport:=\partial \support\,,$$
 for all $t\in[0,T]$. Suppose
the family $\Gamma ^{supp}:=\bigcup _{0\leq t \leq T}  \left(\{t\}
\times \gammasupport\right)$ is sufficiently smooth and let
$\nusupport$ be the outward normal vector on $\gammasupport$.
Suppose moreover that
\begin{enumerate}
\item [(i)] $\n (u^m) \; \text{ is continuous in }\ \overline
{\Q}$ \item [(ii)] ${\mathcal L}^\ep[u]:=u_t-\Delta
(u^m)+\n\cdot(u\n v)-f(u)=0\; \text{ in }\ \{(t,x)\in \overline
{\Q}:\, u(t,x)>0\}$ \item [(iii)] $\di{\frac{\partial
(u^m)}{\partial \nusupport}}=0\; \text{ on }\
\partial \support,\; \text{ for all }\ t\in[0,T]$\,.
\end{enumerate}
Then $u$ is a solution of Problem $(P)$. Similarly  a function
satisfying $(i)$ and $(ii)$---$(iii)$ with equality replaced by
$\leq$ ($\geq$) is a sub-solution (respectively a super-solution)
of Problem $(P)$.
\end{lem}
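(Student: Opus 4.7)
The plan is to reduce the weak formulation \eqref{deqexi-poreux} to classical identities obtained by integrating by parts on the positivity set of $u$, handled slice by slice in time. I fix a test function $\varphi \in C^2(\overline{Q_T})$ with $\varphi \geq 0$ and $\partial\varphi/\partial\nu = 0$ on $\partial\Omega$, and set $D := \{(t,x) \in Q_T :\, u(t,x) > 0\}$, whose time slice at level $t$ is $\Omega_t^{supp}$. By continuity of $u$ and by hypothesis $(i)$, the functions $u$, $u^m$ and $\nabla(u^m)$ all vanish identically on $Q_T \setminus D$, so every integrand in \eqref{deqexi-poreux} reduces to one supported on $D$; it therefore suffices to establish the identity with every space-time integral replaced by its analogue on $D$.

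First I would treat the three terms coming from $\mathcal{L}^\ep[u]$ separately. For the time derivative, slicing and applying the Reynolds transport formula to $\frac{d}{dt}\int_{\Omega_t^{supp}} u\varphi\, dx$, with the boundary flux vanishing because $u \equiv 0$ on $\Gamma_t^{supp}$ by continuity, yields
\[
\iint_D u_t\varphi\, dx\, dt = \int_\Omega u(T)\varphi(T)\, dx - \int_\Omega u_0 \varphi(0)\, dx - \iint_D u\varphi_t\, dx\, dt.
\]
For the diffusion term, two spatial integrations by parts on $\Omega_t^{supp}$ give
\[
\int_{\Omega_t^{supp}} \Delta(u^m)\varphi\, dx = \int_{\partial\Omega_t^{supp}} \varphi\, \frac{\partial (u^m)}{\partial \nu_t^{supp}}\, dS - \int_{\partial\Omega_t^{supp}} u^m\, \frac{\partial \varphi}{\partial \nu_t^{supp}}\, dS + \int_{\Omega_t^{supp}} u^m \Delta\varphi\, dx,
\]
where the first boundary integral is zero by $(iii)$ and the second vanishes because the continuity of $u$ forces $u^m = 0$ on $\Gamma_t^{supp}$ (any portion of $\partial\Omega_t^{supp}$ contained in $\partial\Omega$ is harmless because $\partial\varphi/\partial\nu = 0$ there). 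For the drift term, a single integration by parts together with $u = 0$ on $\Gamma_t^{supp}$ gives $\iint_D \nabla\cdot(u\nabla v)\varphi\, dx\, dt = -\iint_D u\, \nabla v\cdot\nabla\varphi\, dx\, dt$.

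Assembling these three identities with $\mathcal{L}^\ep[u] = 0$ multiplied by $\varphi$ and integrated over $D$ produces \eqref{deqexi-poreux} exactly. The sub- and super-solution variants follow verbatim: the inequalities are preserved throughout because $\varphi \geq 0$, and condition $(iii)$ is assumed with the same sense of inequality as $(ii)$, so the surviving boundary contribution $\int_{\partial\Omega_t^{supp}}\varphi\, \partial(u^m)/\partial \nu_t^{supp}\, dS$ carries the correct sign. The main obstacle I foresee is the rigorous justification of the slicing and of the Reynolds formula on the moving domain $\Omega_t^{supp}$: this is precisely where the smoothness of the family $\Gamma^{supp}$ assumed in the lemma and the continuity of $\nabla(u^m)$ up to $\overline{Q_T}$ from $(i)$ are essential, as they ensure that all surface integrals are well defined and that the flux through the free boundary really cancels.
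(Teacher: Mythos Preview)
The paper does not supply its own proof of this lemma: it simply records that the result is ``proved in \cite{HKLM}'' and moves on. Your argument is the standard one and is essentially what appears in that reference --- multiply $\mathcal L^\ep[u]$ by the test function, integrate over the positivity set, and undo the derivatives by parts, using $u=0$ and condition $(iii)$ to kill the free-boundary contributions. So your proposal is correct and matches the intended proof.

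One small point worth tightening: when you discuss the portion of $\partial\Omega_t^{supp}$ that may lie on $\partial\Omega$, you dispose of the term $\int u^m\,\partial\varphi/\partial\nu$ via $\partial\varphi/\partial\nu=0$, but you should also say why the companion term $\int \varphi\,\partial(u^m)/\partial\nu$ and the drift boundary term $\int \varphi\, u\,\nabla v\cdot\nu$ vanish there. In the setting of the paper the support of $u$ stays compactly inside $\Omega$, so the issue does not arise; in general, condition $(iii)$ as stated on all of $\partial\Omega_t^{supp}$ together with the standing Neumann assumption $\partial v/\partial\nu=0$ on $\partial\Omega$ covers it.
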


\section{Emergence of the transition layers}\label{s:generation}

In this section, we investigate the generation of interface which
occurs very quickly around $\Gammadrift _{t^\ep}$. We prove that,
given a virtually arbitrary initial datum $u_0$, the solution
$\ue$ of $\Pe$ quickly becomes close to $1$ or $0$ in most part of
$\Omega$. More precisely --- recalling that $\Phi(t_1,t_2,x_3)$,
defined in \eqref{flow}, denotes the flow associated with the
Cauchy problem \eqref{lagrange}--- the following holds.

\begin{thm}[Emergence of the layers]\label{th-gen-poreux}Let
$\eta\in(0,1/2)$ be arbitrary. Then there exists $M_0>0$ such
that, for all $\ep>0$ small enough, the following holds with
$t^\ep=\ep|\ln \ep|$.

\begin{enumerate}
\item [(i)] for all $x\in\om$, we have that
\begin{equation}\label{facile}
0 \leq u^\ep(t^\ep, x) \leq 1+\eta\,;
\end{equation}
\item [(ii)] for all $x\in\om$, we have that
\begin{equation}\label{sub-fisher}
\text{ if } \quad u_0(\Phi(0,t^\ep,x))\geq M_0\ep\quad \text{ then
}\quad u^\ep(t^\ep, x) \geq 1-\eta\,;
\end{equation}
\item [(iii)] for all $x\in\om$, we have
\begin{equation}\label{super-fish}
\text{ if }\quad {\rm dist}(\Phi(0,t^\ep,x),\Omega _0)\geq M_0 \ep
\quad \text{ then }\quad u^\ep(t^\ep, x)=0\,,
\end{equation}
where we recall that $\Omega _0=\{x:\,u_0(x)>0\}$ (see Assumption
\ref{H2}).
\end{enumerate}
\end{thm}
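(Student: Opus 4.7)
\bigskip
\noindent \textbf{Proof sketch (plan).}
My approach is to build explicit sub- and super-solutions governed by a perturbed reaction nonlinearity $\f(u):=u(1-\de-u)$, whose positive zero is $\hp=1-\de$ and whose linearized rate is $\mm=1-\de$. The perturbation parameter will be taken as $\de=\pm\ep M$, where
\[
M:=\sup_{\ep,\,(t,x)\in[0,T]\times\ombar}\bigl|\Delta \ve(t,x)\bigr|
\]
is finite and uniform in $\ep$ thanks to Assumption \ref{H3}. The intuition is that on the time scale $t^\ep=\ep|\ln\ep|$ the degenerate diffusion $\ep\Delta(u^m)$ is negligible, the advection contributes only the pointwise term $-u\Delta \ve$ (of size at most $M u$), and the dominant dynamics is the ODE $\eun u(1-u)$ transported along the flow $\Phi$.

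\vsp
\noindent\emph{Part (i): upper bound.} I take the spatially constant super-solution $U^+(t,x):=Y^+(t)$ where $Y^+$ solves $\ep (Y^+)'=Y^+(1+\ep M - Y^+)$ with $Y^+(0):=\max(1,\Vert u_0\Vert_{L^\infty})+1$. Because $Y^+$ does not depend on $x$, the porous-medium term $\ep\Delta((U^+)^m)$ vanishes and the drift reduces to $Y^+\Delta \ve\geq -MY^+$; one checks directly that $\mathcal L^\ep[U^+]\geq 0$ and that the Neumann condition holds trivially, so Lemma \ref{lemma-sup-poreux} applies. Explicit integration of the ODE and the exponential attractivity with rate $\mm=1+\ep M$ at the steady state $\hp=1+\ep M$ give $Y^+(t^\ep)=1+\mathcal O(\ep)\leq 1+\eta$ for $\ep$ small, yielding \eqref{facile}.

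\vsp
\noindent\emph{Part (ii): propagation of positive values.} For each $a>0$ let $Y^-(t,a)$ solve $\ep Y^-_t=Y^-(1-\ep M - Y^-)$ with $Y^-(0,a)=a$ (and $Y^-(\cdot,0)\equiv 0$), whose positive equilibrium is $\alpha_+(\ep M)=1-\ep M$. By direct integration, there exists $M_0$ depending only on $\eta$ and $M$ such that $Y^-(t^\ep,a)\geq 1-\eta$ as soon as $a\geq M_0\ep$. I then build a sub-solution of the form
\[
U^-(t,x):=Y^-\!\bigl(t,\,u_0(\Phi(0,t,x))\bigr),
\]
so that the $x$-dependence is only through the Lagrangian pre-image. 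Writing the flow identity $\partial_t \Phi(0,t,x)=-(D_x\Phi(0,t,x))\,\n v(t,x)$ makes the transport derivatives $U^-_t+\n v\cdot\n U^-$ collapse to $Y^-_t$; the residual advection $\ep\n v^\ep_1\cdot\n U^-$ and the drift divergence $U^-\Delta \ve\leq M U^-$ are precisely what the perturbation $-\ep M\,U^-$ in $\f$ was designed to absorb. It remains to handle the degenerate diffusion term $\ep\Delta((U^-)^m)$: I expect to replace $u_0$ by a smoothed-from-below profile (as in Assumption \ref{H2} with \eqref{pente}) so that the map $x\mapsto Y^-(t,u_0(\Phi(0,t,x)))$ has enough regularity, and then use that $(U^-)^m$ is of size $\mathcal O(1)$ with derivatives bounded independently of $\ep$, making $\ep\Delta((U^-)^m)=\mathcal O(\ep)$, which is small compared with $\eun \f(U^-)$ away from the equilibrium and is taken care of by the slack $\de=\ep M$ near it. Lemma \ref{lemma-sup-poreux} then yields that $U^-$ is a sub-solution with $U^-(0,\cdot)\leq u_0$, and comparison gives \eqref{sub-fisher}.

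\vsp
\noindent\emph{Part (iii): zero property.} Here I construct a compactly supported super-solution that dominates $u_0$ at $t=0$ and remains null outside an $M_0\ep$-enlargement of $\Phi(t^\ep,0,\Omega_0)$. The template is a Barenblatt-type bump of height $\leq 1+\eta$ supported in a tubular neighborhood of the transported set, rigidly translated along the characteristics of $\n v$. The only enemy is the finite propagation speed of the degenerate diffusion: on the time interval $[0,t^\ep]$ the PME part expands the support by at most $(\ep\cdot t^\ep)^{1/(N(m-1)+2)}=o(\ep)$, which comfortably fits inside the $M_0\ep$ cushion once $M_0$ is large. The drift $-u\Delta\ve$ and the reaction $\eun u(1-u)$ are then controlled exactly as in part (i) by a perturbed ODE on the height. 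Invoking Lemma \ref{lemma-sup-poreux} and the comparison principle of Theorem \ref{Existence-comparison}(i) yields $\ue(t^\ep,x)=0$ under the hypothesis of \eqref{super-fish}.

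\vsp
\noindent\emph{Main obstacles.} The delicate step is part (ii): the candidate sub-solution $U^-$ inherits only the regularity of $u_0$, so rigorously bounding the degenerate diffusion $\ep\Delta((U^-)^m)$ without overshooting the perturbation budget $\ep M Y^-$ in the ODE forces a careful mollification of the initial datum from below and a separate treatment in a layer of width $\mathcal O(\ep)$ around $\Gamma_0$, where $(U^-)^m$ develops the characteristic degeneracy. The hypothesis \eqref{pente} on the slope of $u_0$ at $\Gamma_0$ is precisely what guarantees that this degenerate layer has controlled thickness and that $\{u_0(\Phi(0,t^\ep,\cdot))\geq M_0\ep\}$ is close to $\Omegadrift_{t^\ep}$ up to an $\mathcal O(\ep)$ neighborhood of $\Gammadrift_{t^\ep}$, which will be essential for patching together (i)--(iii) into the main Theorem \ref{th:results}.
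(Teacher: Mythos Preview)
Your plan for part~(i) is fine and in fact simpler than the paper's route. The genuine gaps are in parts~(ii) and~(iii).

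\textbf{Part~(ii).} The claim that ``$(U^-)^m$ has derivatives bounded independently of $\ep$, making $\ep\Delta((U^-)^m)=\mathcal O(\ep)$'' is false, and the mollification you propose in the closing paragraph cannot repair it. Writing $U^-(t,x)=Y(t/\ep,\xi)$ with $\xi=u_0(\Phi(0,t,x))$, one has $\n U^-=Y_\xi\,\n_x[u_0(\Phi)]$, and for a logistic-type flow $Y_\xi$ grows like $e^{\mu\tau}$ while the trajectory is near the unstable equilibrium. At $\tau=|\ln\ep|$ this gives $Y_\xi\sim 1/\ep$ in the layer $\{\xi\sim\ep\}$, and $(Y^m)_{\xi\xi}$ contains $m(m-1)Y^{m-2}Y_\xi^2$ of order $1/\ep^2$, so $\ep\Delta((U^-)^m)$ is of order $1/\ep$ there. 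The blow-up comes from the stretching of the ODE flow, not from roughness of $u_0$; the slack $\de=\ep M$ in the reaction contributes at most a term of order $1$ (namely $M\,U^-$) to $\mathcal L^\ep$ and cannot absorb it. The paper's remedy is to shift the second argument of $Y$: one takes
\[
w_\ep^\pm(t,x)=\Bigl[Y\Bigl(\tfrac t\ep,\,u_0(\Phi(0,t,x))\pm\ep^2 C^\star\bigl(e^{\mu(\pm\ep M) t/\ep}-1\bigr);\pm\ep M\Bigr)\Bigr]^+.
\]
The correction stays $\mathcal O(\ep)$ on $[0,t^\ep]$, but its time derivative contributes $\ep C^\star\mu\,e^{\mu t/\ep}Y_\xi$ to $(w_\ep^\pm)_t$. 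After factoring $\ep Y_\xi$ out of $\mathcal L^\ep$, the diffusion enters only through $(Y^m)_\xi/Y_\xi$ and $(Y^m)_{\xi\xi}/Y_\xi$, which by Lemma~\ref{properties-Y} are bounded by $C$ and $C(e^{\mu\tau}-1)$ respectively, and are then dominated by $C^\star\mu\,e^{\mu t/\ep}$ once $C^\star$ is large. This exponential shift in $\xi$ is the missing idea.

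\textbf{Part~(iii).} The Barenblatt estimate is miscomputed: $(\ep\cdot t^\ep)^{1/(N(m-1)+2)}=(\ep^2|\ln\ep|)^{1/(N(m-1)+2)}$ has exponent at most $1/4$ (since $m\ge 2$, $N\ge 2$), hence is of order at least $\ep^{1/2}$, not $o(\ep)$. More fundamentally, the pure PME scaling is irrelevant once the large reaction $\eun u(1-u)$ is present: it pushes $u$ towards $1$ on the fast time scale, which feeds the degenerate diffusion and spreads the support by $c^* t^\ep=\mathcal O(\ep|\ln\ep|)$ if one argues via the travelling wave (this is exactly the bound obtained in \cite{HKLM}). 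To reach the sharp $\mathcal O(\ep)$ in \eqref{super-fish}, the paper again uses the ODE $Y$, but with the affine argument $\xi=-(\Phi(0,t,x)-x_0)\cdot n_0$ in place of $u_0(\Phi)$, together with the same exponential correction as above; convexity of $\Omega_0$ guarantees $u_0(x)\le -K(x-x_0)\cdot n_0$ at $t=0$, and the correction at $t=t^\ep$ shifts the zero of this super-solution by only $\mathcal O(\ep)$.
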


In order to prove the above theorem, we shall construct sub- and
super-solutions. As mentioned before, in this very early stage, we
have to take into account both the reaction and the drift terms.
We start with some preparations.

\subsection{A related ODE and the flow $\Phi$}\label{ss:ode}

\noindent{\bf An ODE.} The solution of the problem without
diffusion nor advection, namely $\bar{u}_t=\ep ^{-1}\,f(\bar{u})$
supplemented with the condition $\bar{u}(0,x)=u_0(x)$,  is written
in the form $\bar{u}(t,x)=Y\left(\frac{t}{\ep},\,u_0(x)\right)$,
where $Y(\tau,\xi)$ denotes the solution of the ordinary
differential equation $Y_\tau (\tau,\xi)=f(Y(\tau,\xi))$
supplemented with the initial condition  $Y(0,\xi)=\xi$.
Nevertheless, in order to take care of the term $-u\Delta \ve$, we
need a slight modification of $f$.

Let $\tilde f$ be the smooth odd function that coincides with
$f(u)=u(1-u)$ on $[0,\infty)$: $\tilde f$ has exactly three zeros
$-1<0<1$ and
\begin{equation}
{\tilde f}'(-1)=-1<0\,, \qquad \tilde f'(0)=1>0\,, \qquad \tilde
f'(1)=-1<0\,,
\end{equation}
i.e. $\tilde f$ is of the bistable type. Next, we define
$$
\f(u):=\tilde f(u)+\delta\,.
$$
For $|\delta|$ small enough, this function is still of the
bistable type: if $\de _0$ is small enough, then for any
$\de\in(-\de_0,\de_0)$, $\f$ has exactly three zeros $\hm < \h <
\hp$ and there exists a positive constant $C$ such that
\begin{equation}\label{h}
|\hm +1|+|\h|+|\hp-1|\leq C|\de|\,,
\end{equation}
\begin{equation}\label{mu}
| \mm -1| \leq C|\de|\,,
\end{equation}
where $\mm$ is the slope of $\f$ at the unstable zero, namely
\begin{equation}\label{slope}
\mm:= {\f} '(\h)={\tilde f}'(\h)\,.
\end{equation}

Now for each $\de\in(-\de_0,\de_0)$, we define $Y(\tau,\xi\,;\de)$
as the solution of the ordinary differential equation
\begin{equation}\label{ode-poreux}
\left\{\begin{array}{ll} Y_\tau (\tau,\xi\,;\de)&=\f
(Y(\tau,\xi\,;\de)) \quad \text { for }\tau >0\vspace{3pt}\\
Y(0,\xi\,;\de)&=\xi\,,
\end{array}\right.
\end{equation}
where $\xi$ varies in $(-C_0,C_0)$, with
\begin{equation}\label{def:cste}
C_0:=\Vert u_0 \Vert _{L^\infty(\Omega)}+1\,. \end{equation}
We
claim that $Y(\tau,\xi\,;\de)$ has the following properties.

\begin{lem}[Behavior of $Y$]\label{properties-Y}There exist positive
constants $\de_0$ and $C$ such that the following holds for all
$(\tau,\xi\,;\de) \in (0,\infty)\times (-C_0,C_0)\times (-\de
_0,\de _0)$.
\begin{enumerate}
\item [(i)]
$\text{ If }\quad \xi >a(\de) \quad\text{ then }\quad Y(\tau,\xi\,;\de)>a(\de)$\\
$\text{ If }\quad \xi <a(\de) \quad\text{ then }\quad
Y(\tau,\xi\,;\de)<a(\de)$ \item [(ii)] $|Y(\tau,\xi\,;\de)|\leq
C_0$ \item [(iii)] $0< Y _ \xi (\tau,\xi\,;\de)\leq C e^{\mm
\tau}$ \item [(iv)]
$\left|\di{\frac{Y_{\xi\xi}}{Y_\xi}(\tau,\xi\,;\de)}\right|\leq C
(e^{\mm \tau}-1)$.
\end{enumerate}
\end{lem}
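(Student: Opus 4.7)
My plan is to treat the four items in turn using standard ODE manipulations around the bistable nonlinearity $\f$.

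For (i) and (ii) I would argue by uniqueness. Since $\hm, \h, \hp$ are the three stationary points of \eqref{ode-poreux}, no trajectory can cross them, and the sign of $\f$ on each subinterval determines the direction of motion; this immediately gives (i). For (ii), \eqref{h} ensures $\max(|\hm|,|\hp|) \leq 1 + C|\de|<C_0$ for $\de_0$ small, so any trajectory starting at $\xi \in (-C_0, C_0)$ is trapped between $\xi$ and the stable equilibrium it converges to, whence $|Y|\leq C_0$.

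The core of the argument is (iii). First I would differentiate \eqref{ode-poreux} in $\xi$, giving $(Y_\xi)_\tau = \f'(Y)Y_\xi$ with $Y_\xi(0,\xi;\de) = 1$, hence
$$Y_\xi(\tau, \xi; \de) = \exp\!\left(\int_0^\tau \f'(Y(s, \xi; \de))\, ds\right) > 0.$$
Observing that $\f(Y)$ satisfies the same linear ODE as $Y_\xi$, I would obtain the key identity
$$Y_\xi(\tau, \xi; \de) = \frac{\f(Y(\tau, \xi; \de))}{\f(\xi)}$$
when $\f(\xi)\neq 0$ (with $Y \equiv \h$ and $Y_\xi = e^{\mm \tau}$ in the degenerate case $\xi = \h$). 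To obtain a uniform $C e^{\mm \tau}$ bound I would split according to $|\xi - \h|$. If $|\xi - \h| \geq \rho_0$ for a fixed small $\rho_0$, then $|\f(\xi)| \geq c_0>0$ while $|\f(Y)|$ is uniformly bounded on $[-C_0, C_0]$ by (ii), so $Y_\xi \leq C \leq C e^{\mm \tau}$. If $|\xi - \h| < \rho_0$, I would linearize near $\h$ using $\f(u) = \mm(u-\h) + O((u-\h)^2)$: Gronwall yields $|Y(\tau) - \h| \leq C e^{\mm \tau} |\xi - \h|$ up to the first time $\tau^*$ at which $|Y(\tau)-\h| = \rho_0$, and hence $|\f(Y)/\f(\xi)| \leq C e^{\mm \tau}$ on $[0, \tau^*]$; for $\tau > \tau^*$, the same linear estimate at $\tau^*$ gives $|\f(\xi)|e^{\mm\tau^*} \geq c\rho_0$, so $|\f(Y)/\f(\xi)| \leq C e^{\mm \tau}$ again.

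For (iv), setting $R := Y_{\xi\xi}/Y_\xi$, I would differentiate $(Y_\xi)_\tau = \f'(Y)Y_\xi$ once more in $\xi$ and simplify; the $\f'(Y)$ terms cancel out, leaving
$$R_\tau = \f''(Y)\, Y_\xi, \qquad R(0, \xi; \de) = 0,$$
the initial value following from $Y_\xi(0) = 1$ and $Y_{\xi\xi}(0) = 0$ (direct consequences of $Y(0,\xi;\de) = \xi$). Integrating and using (iii) together with the uniform boundedness of $\f''$ on $[-C_0, C_0]$ yields
$$|R(\tau, \xi; \de)| \leq \|\f''\|_\infty \int_0^\tau Y_\xi(s, \xi; \de)\, ds \leq \frac{C\,\|\f''\|_\infty}{\mm}(e^{\mm \tau} - 1),$$
with $1/\mm$ uniformly bounded in $\de$ by \eqref{mu}. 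The main obstacle I expect is the gluing in (iii) between the linear regime near $\h$ and the asymptotic decay toward $\hm, \hp$, together with proving that the resulting constant $C$ is uniform in $\xi\in(-C_0,C_0)$ and $\de\in(-\de_0,\de_0)$; uniformity in $\de$ is handled via \eqref{h} and \eqref{mu}, which keep the bistable structure of $\f$ stable under small perturbations.
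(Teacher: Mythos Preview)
Your proposal is correct and follows the standard line of argument: the paper itself only states that (i)--(ii) are direct consequences of the bistable profile of $\f$ and defers (iii)--(iv) to \cite{A-chemo}, where the same tools you use --- the identity $Y_\xi=\f(Y)/\f(\xi)$ coming from the fact that $Y_\xi$ and $\f(Y)$ solve the same linear ODE, and the derived equation $R_\tau=\f''(Y)Y_\xi$ for $R=Y_{\xi\xi}/Y_\xi$ --- are the key ingredients. Your splitting in (iii) according to $|\xi-\h|$ and the Gronwall-based linearization near the unstable zero is exactly the expected way to get uniformity of $C$ in $(\xi,\de)$.
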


Properties $(i)$ and $(ii)$ are direct consequences of the
bistable profile of $\f$.  For proofs of $(iii)$ and $(iv)$ we
refer to \cite{A-chemo}.

\vskip 4pt \noindent {\bf The flow $\Phi$.} Let us briefly recall
well known facts concerning the flow $\Phi(t_1,t_2,x_3)$. By
definition we have
\begin{equation}\label{edp-flow}
\frac {\partial \Phi}{\partial t_1}(t,t_0,x_0)=\n
v(t,\Phi(t,t_0,x_0))\,.
\end{equation}
Next, note that, by uniqueness,
$$
\Phi(t,t_0,\Phi(t_0,t,x_0))=x_0\,, $$ for all $(t,t_0,x_0)\in\R
\times \R \times \R ^N$. Differentiating this identity with
respect to $t_0$, we get
$$
\frac {\partial \Phi}{\partial t_2} (t,t_0,x)+D_3\Phi (t,t_0,x)
\frac {\partial \Phi}{\partial t_1}(t_0,t,x_0)=0_{\R^N}\,,
$$
where $x:=\Phi(t_0,t,x_0)$ and where $D_3 \Phi(t_1,t_2,x_3)$
denotes the Jacobian matrix of $\Phi$ w.r.t. the third variable.
Hence, using \eqref{edp-flow} we infer that
\begin{equation}\label{egalite}
\frac {\partial \Phi}{\partial t_2} (t,t_0,x)+D_3\Phi (t,t_0,x) \n
v (t_0,x)=0_{\R^N}\,,
\end{equation}
which is of crucial importance for our analysis.

\subsection{Proof of \eqref{facile} and \eqref{sub-fisher}}\label{ss:gen1}

We use the notation $z^+=\max(z,0)$. Our sub- and super-solutions
are given by
\begin{equation}\label{w+-}
w_\ep^\pm(t,x):=\left[Y\left(\frac{t}{\ep},\,u_0(\Phi(0,t,x))\pm\ep^2\CG(\emutt-1)\,;\pm
\ep M\right)\right]^+\,,
\end{equation}
or equivalently by
\begin{equation}\label{w+-2}
w_\ep^\pm(t,\Phi(t,0,x)):=\left[Y\left(\frac{t}{\ep},\,u_0(x)\pm\ep^2\CG(\emutt-1)\,;\pm
\ep M\right)\right]^+\,.
\end{equation}
Here $Y(\tau,\xi\,;\delta)$ is the solution of \eqref{ode-poreux},
$\mm$ the slope defined in \eqref{slope}, $\Phi(t_1,t_2,x_3)$ the
flow defined in \eqref{flow} and $M$ is chosen such that, for all
$\ep>0$ small enough, $M\geq C_0 \Vert \Delta \ve \Vert
_{L^\infty(\Q)}$, with $C_0$ defined by \eqref{def:cste}.

\begin{lem}[Sub- and super-solutions for small times]\label{g-w}
There exists $\CG >0$ such that, for all $\ep>0$ small enough,
$(w_\ep^-,w_\ep^+)$ is a pair of sub- and super-solutions for
Problem $\Pe$, in the domain $ [0, t^\ep]\times \ombar$.
\end{lem}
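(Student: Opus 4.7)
The plan is to apply Lemma \ref{lemma-sup-poreux}. Conditions (i) (continuity of $\nabla((w_\ep^\pm)^m)$) and (iii) (vanishing of its normal derivative on $\gammasupport$) are immediate from $\nabla((w_\ep^\pm)^m) = m(w_\ep^\pm)^{m-1}\nabla w_\ep^\pm$, which vanishes continuously at $\{w_\ep^\pm = 0\}$ since $m\geq 2$. The Neumann condition on $\partial\om$ is likewise trivial: since $\overline{\Omega_0}\subset\subset\om$, $u_0\equiv 0$ in a neighborhood $V$ of $\partial\om$; as $\Phi(0,t,\cdot)$ remains uniformly close to the identity on $V$ for $t\in[0,t^\ep]$, the argument $\xi:=u_0(\Phi(0,t,x))\pm\ep^2\CG(\emutt-1)$ reduces to the $x$-independent expression $\pm\ep^2\CG(\emutt-1)$ in a smaller neighborhood of $\partial\om$, whence $\nabla w_\ep^\pm\equiv 0$ there. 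The core of the proof is therefore to show $\mathcal{L}^\ep[w_\ep^+]\geq 0\geq\mathcal{L}^\ep[w_\ep^-]$ on the positivity sets.

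For this, I would first establish a characteristic identity. Fixing $y$ and writing $x=\Phi(t,0,y)$, differentiating \eqref{w+-2} in $t$ at $y$ fixed via \eqref{edp-flow} and the ODE \eqref{ode-poreux} yields
\begin{equation*}
(w_\ep^\pm)_t + \nabla w_\ep^\pm\cdot\nabla v = \frac{1}{\ep}\f(w_\ep^\pm) \pm \ep\,\CG\,\mm\,\emutt\,Y_\xi,
\end{equation*}
with $\delta=\pm\ep M$ in $\f$ and $\mm$. Plugging this into $\mathcal{L}^\ep[w] = w_t - \ep\Delta(w^m) + \nabla\cdot(w\nabla\ve) - \ep^{-1}f(w)$, using the decomposition $\ve=v+\ep v_1^\ep$, and simplifying with $\f(w)-f(w)=\pm\ep M$, gives
\begin{equation*}
\mathcal{L}^\ep[w_\ep^\pm] = \pm M + w_\ep^\pm\Delta\ve \pm \ep\,\CG\,\mm\,\emutt\,Y_\xi + \ep\nabla w_\ep^\pm\cdot\nabla v_1^\ep - \ep\Delta((w_\ep^\pm)^m).
\end{equation*}
Thanks to the choice $M\geq C_0\|\Delta\ve\|_{L^\infty(\Q)}$ and $|w_\ep^\pm|\leq C_0$, the pair $\pm M + w_\ep^\pm\Delta\ve$ already has the correct sign (nonnegative for $+$, nonpositive for $-$).

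The main obstacle is then to absorb the last two terms into $\pm\ep\,\CG\,\mm\,\emutt\,Y_\xi$ through an appropriate choice of $\CG$. Expanding
\begin{equation*}
\Delta((w_\ep^\pm)^m) = m(m-1)w^{m-2}Y_\xi^2|\nabla\xi|^2 + m w^{m-1}\bigl(Y_{\xi\xi}|\nabla\xi|^2 + Y_\xi\Delta\xi\bigr),
\end{equation*}
and invoking Lemma \ref{properties-Y}, namely $|w|\leq C_0$, $0<Y_\xi\leq C\,\emutt$, and crucially $|Y_{\xi\xi}|\leq C\,Y_\xi(\emutt-1)$, together with uniform bounds on $|\nabla\xi|$ and $|\Delta\xi|$ (from the smoothness of $\Phi$ and, if necessary, of a smooth approximation of $u_0$ as permitted by the remark following Assumption \ref{H2}), I expect the estimates
\begin{equation*}
|\Delta((w_\ep^\pm)^m)| \leq C\,Y_\xi\,\emutt, \qquad |\nabla w_\ep^\pm\cdot\nabla v_1^\ep|\leq C\,Y_\xi.
\end{equation*}
Taking $\CG$ large enough (depending on $C$ and $\mu$) then yields $\mathcal{L}^\ep[w_\ep^+]\geq 0$ and $\mathcal{L}^\ep[w_\ep^-]\leq 0$, which by Lemma \ref{lemma-sup-poreux} concludes the proof. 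The delicate point is the sharp estimate of Lemma \ref{properties-Y}(iv) on $Y_{\xi\xi}/Y_\xi$: without the factor $(\emutt-1)$ saved there, the $Y_{\xi\xi}$ contribution would grow like $\emutt^{\,2}$ instead of $Y_\xi\,\emutt$, which cannot be compensated by the $\mathcal{O}(1)$ correction $\ep^2\CG(\emutt-1)$ built into the ansatz on $[0,t^\ep]$. This balance is precisely what dictates the cut-off time $t^\ep=\ep|\ln\ep|$.
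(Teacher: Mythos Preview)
Your proof is correct and follows essentially the same route as the paper: the paper computes all partial derivatives of $w_\ep^+$ separately and then invokes the flow identity \eqref{egalite} to make the cross-term $E_2$ vanish, which is exactly your ``characteristic identity'' obtained by differentiating \eqref{w+-2} along $x=\Phi(t,0,y)$. The remaining estimates---the sign of $\pm M+w_\ep^\pm\Delta\ve$ from the choice of $M$, and the absorption of $\ep\Delta((w_\ep^\pm)^m)$ and $\ep\nabla w_\ep^\pm\cdot\nabla v_1^\ep$ into $\pm\ep\CG\,\mu\,e^{\mu t/\ep}Y_\xi$ via Lemma~\ref{properties-Y}~(ii)--(iv)---match the paper's treatment of $E_1$ and $E_3$.
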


Before proving the lemma, we remark that $w^-_\ep(0,x)=w^+ _\ep
(0,x)=u_0(x)$. Consequently, by the comparison principle, we have
\begin{equation}\label{g-coincee1}
w_\ep^-(t,x) \leq u^\ep(t,x) \leq w_\ep^+(t,x) \quad \text{ for
all }\, (t,x)\in [0,t^\ep]\times \ombar\,.
\end{equation}

\begin{proof} In order to prove that $(w_\ep ^-,w_\ep ^+)$ is a pair of
sub- and super-solutions for Problem $\Pe$ --- if $\CG$ is
appropriately chosen--- we check the sufficient conditions stated
in Lemma \ref{lemma-sup-poreux}.

On the one hand, concerning the sub-solution $w_\ep ^-$, for
$(t,x)$ such that $x\in\support [w_\ep^-]:=\{x:\, w_\ep
^-(t,x)>0\}$ we have, at point $(t,x)$,
\begin{eqnarray*}
\n (w_\ep ^-)^m= (mY^{m-1}Y_\xi)\Big(\frac{t}{\ep},\,u_0(\Phi(0,t,x))-\ep^2\CG(\emuttmoins-1)\,;\nonumber\\
-\ep M\Big) \n _x (u_0(\Phi(0,t,x))\,.
\end{eqnarray*}
If $(t,x) \to(t_0,x_0)$ such that $x_0\in \gammasupportzero [w_\ep
^-]:=\partial \supportzero [w_\ep ^-]$ then the equality above
implies
$$
\lim _{(t,x)\to(t_0,x_0)} \n (w_\ep ^-)^m (t,x)=0_{\R ^N}\,,
$$
and conditions $(i)$ and $(iii)$ of Lemma \ref{lemma-sup-poreux}
are checked for the sub-solution.

On the other hand, concerning the super-solution $w_\ep ^+$, note
that $\xi:=u_0(\Phi(0,t,x))+\ep^2\CG(\emuttplus-1)$ is positive.
Therefore the cubic profile of $\f$ shows that, for $t>0$,
$$
\begin{array}{ll}
\support[w_\ep^+]=\om\vspace{3pt}\\
\gammasupport[w_\ep ^+]:=\partial \support [w_\ep ^+]=\partial
\om\,.
\end{array}
$$
Recall that $u_0=0$ in a neighborhood $\mathcal V$ of $\partial
\Omega$; if $x$ is sufficiently close to $\partial \Omega$,
$\Phi(0,t,x)$ lives in $\mathcal V$ for all $t\in[0,t^\ep]$ (with
$\ep>0$ sufficiently small). Therefore \eqref{w+-} shows that
$w_\ep ^+$ is independent on $x$ near $\partial \Omega$ and
condition $(iii)$ of Lemma \ref{lemma-sup-poreux} for the
super-solution is checked (and condition $(i)$ is obviously
checked).

\vsp Then it remains to prove that
$$
{\mathcal L}^\ep [w_\ep ^-]:=(w_\ep ^-)_t-\ep \Delta ({w_\ep
^-})^m+\n\cdot(w_\ep ^-\n \ve)-\eun f(w_\ep ^-)\leq 0\,,
$$
in $\{(t,x)\in  [0,t^\ep]\times \ombar:\, w_\ep ^-(t,x)>0\}$ and
that ${\mathcal L}^\ep [w_\ep ^+] \geq 0$ in $\{(t,x)\in
[0,t^\ep]\times\ombar \}$. We will only prove the latter
inequality since the proof of the former is similar.

We compute
$$
\begin{array}{ll}
\partial _t w_\ep ^+=\eun Y_\tau+\frac{\partial}{\partial
t}\left[u_0(\Phi(0,t,x))\right]Y_\xi+\ep\mu(\ep M)\CG\emuttplus Y_\xi\vsp\\
\n w_\ep ^+=\n _x\left[u_0(\Phi(0,t,x))\right]Y_\xi \vsp\\
\n \left[(w_\ep ^+)^m\right]=\n
_x\left[u_0(\Phi(0,t,x))\right](Y^m)_\xi\vsp\\
\Delta \left [(w_\ep
^+)^m\right]=|\n_x\left[u_0(\Phi(0,t,x))\right]|^2(Y^m)_{\xi\xi}+\Delta
_x \left[u_0(\Phi(0,t,x))\right](Y^m)_\xi\,,
\end{array}
$$
where the function $Y$ and its derivatives are taken at the point
$$(\tau,\xi\,; \de):=(t /{\ep },
u_0(\Phi(0,t,x))+\ep^2\CG(\emuttplus-1)\,;\ep M)\,. $$
 Note that
$$
\frac{\partial}{\partial t}\left[u_0(\Phi(0,t,x))\right]=\n u_0
(\Phi(0,t,x))\cdot \frac{\partial \Phi}{\partial t_2} (0,t,x)\,,
$$
and that
$$
\n_x\left[u_0(\Phi(0,t,x))\right]=(D_3\Phi(0,t,x))^T \n u_0
(\Phi(0,t,x))\,,
$$
with $(D_3 \Phi(t_1,t_2,x_3))^T$ the transpose of the Jacobian
matrix of $\Phi$ w.r.t. the third variable.

Therefore, using $f(w_\ep ^+)=\tilde f(w_\ep ^+)=\tilde f _{\ep
M}(Y)-\ep M$ and the equation $Y_\tau=\tilde f _{\ep M}(Y)$, we
infer that
$$
{\mathcal L} ^\ep[w_\ep ^+]=E_1+E_2+\ep Y _ \xi E_3\,,
$$
where
$$
\begin{array}{ll}
  E_1=& M+Y\Delta \ve\vsp \\
 E_2=& \n u_0 (\Phi(0,t,x))\cdot\left(\frac{\partial \Phi}{\partial t_2}(0,t,x)+D_3\Phi(0,t,x)\n v(t,x)\right)Y_\xi\vsp \\
E_3=&\CG \mu (\ep M)\, \emuttplus+(D_3\Phi(0,t,x))^T \n
u_0(\Phi(0,t,x))\cdot \n v_1^\ep(t,x)\vsp\\&-\Delta
_x\left[u_0(\Phi(0,t,x))\right]
\frac{(Y^m)_\xi}{Y_\xi}-|\n_x\left[u_0(\Phi(0,t,x))\right]|^2\frac{(Y^m)_{\xi\xi}}{Y_\xi}\,.
\end{array}
$$

We note that, for $\ep
>0$ sufficiently small, $\de=\ep M \in(-\de _0,\de_0)$ and that,
in the range $0 \leq t \leq t^\ep= \ep |\ln \ep|$,
$$
\xi=u_0(\Phi(0,t,x))+ \ep ^2 \CG(\emuttplus-1)\in (-C_0,C_0)\,,
$$
so that estimates of Lemma \ref{properties-Y} on $Y(\tau,\xi;\de)$
will apply.

Since we have chosen $M\geq C_0\Vert \Delta \ve \Vert _{L ^\infty
(\Q)}$, $E_1\geq 0$ holds. Moreover, \eqref{egalite} implies
$E_2=0$. In the sequel we denote by $C$ various positive constants
which may change from place to place but do not depend on $\ep$.
From Lemma \ref{properties-Y} $(ii)$---$(iv)$ we see that
$\left|\frac {(Y^m)_\xi}{Y_\xi}\right|=|mY^{m-1}|\leq C$ and that
$$\left|\frac {(Y^m)_{\xi\xi}}{Y_\xi}\right|\leq m(m-1)
|Y^{m-2}Y_\xi|+m Y^{m-1} \left|\frac
{Y_{\xi\xi}}{Y_\xi}\right|\leq C+C(e^{\mu(\ep M)t/\ep }-1)\,,$$
since $m\geq 2$. Hence
$$
E_3\geq (\CG \mu(\ep M)-C)\emuttplus -C\,.
$$
Since $\mu(\ep M)\to 1$ as $\ep \to 0$, by choosing $\CG \gg C$ we
see that $E_3\geq 0$ for all $\ep >0$ small enough.

Recalling that $Y_\xi >0$, we get ${\mathcal L}^\ep [w_\ep^+]\geq
0$ and the lemma is proved.
\end{proof}

We are now in the position to prove \eqref{facile} and
\eqref{sub-fisher}.

\begin{proof}
Let $\eta\in(0,1/2)$ be arbitrary. Then \cite[Lemma 3.11]{A-chemo}
provides a constant $C_Y>0$ such that, for all $\ep>0$ small
enough, for all $\xi\in (-C_0,C_0)$,
\begin{equation}\label{g-part11}
 Y( | \ln \ep |,\xi\,;\pm \ep M) \leq 1+\eta\,;
\end{equation}
\begin{equation}\label{g-part22}
\text{ if }\quad \xi\geq C_Y \ep\quad \text{ then }\quad Y(| \ln
\ep |,\xi\,;\pm \ep M)\geq 1-\eta\,.
\end{equation}

By setting $t= t^\ep=\ep|\ln \ep|$ in \eqref{g-coincee1}, we
obtain
\begin{eqnarray}
&&Y\left(|\ln \ep|, u_0(\Phi(0,t^\ep,x))-\CG \ep ^2 e^{\mu(-\ep M)|\ln \ep|} +\CG \ep ^2 \,;-\ep M\right) ^+\leq u^\ep(t^\ep,x)\nonumber\\
&&\leq Y\left(|\ln \ep|, u_0(\Phi(0,t^\ep,x))+\CG \ep ^2e^{\mu(\ep
M)|\ln \ep|} -\CG \ep ^2\,;\ep M\right)^+\,.\label{g-gr}
\end{eqnarray}
Therefore, the assertion \eqref{facile} of Theorem
\ref{th-gen-poreux} is a direct consequence of \eqref{g-gr} and
\eqref{g-part11}. Next we prove \eqref{sub-fisher}. Note that in
view of \eqref{mu}, we have $\ep e^{\mu(-\ep M)|\ln \ep|}\to 1$ as
$\ep \to 0$. Therefore, for $\ep >0$ small enough (since $Y_\xi
>0$),
\begin{equation}\label{blabla}
u^\ep(t^\ep, x)\geq Y\left(|\ln \ep|, u_0(\Phi(0,t^\ep,x)) -\frac
32 \CG \ep +\CG \ep ^2 \,;-\ep M\right) ^+\,.
\end{equation}
Choose $M_0\gg 0$ so that $M_0\ep- \frac 32\CG \ep+\CG \ep ^2 \geq
\max(C_Y \ep,a(-\ep M))$, with $C_Y$ as in \eqref{g-part22}. Then,
for any $x\in \om$ such that $u_0(\Phi(0,t^\ep,x))\geq M_0 \ep$,
we have
$$ u_0(\Phi(0,t^\ep,x))-\frac 32 \CG \ep +\CG \ep ^2\geq C_Y
\ep\,.$$ Combining this, \eqref{blabla} and \eqref{g-part22}, we
see that
\[
u^\ep(t^\ep, x)\geq 1-\eta\,.
\]
This completes the proof of \eqref{sub-fisher}.
\end{proof}

\subsection{Proof of \eqref{super-fish}}\label{ss:gen2}

Let us recall that a finite speed of propagation property, as is
\eqref{super-fish}, is proved in \cite{HKLM}: the authors
construct a super-solution using a related travelling wave $U$ of
minimal speed, and they obtain an $\mathcal O(\ep|\ln \ep|)$
estimate of the thickness of the transition layers. We borrow some
ideas from this paper but, in order to obtain the improved
$\mathcal O(\ep)$ estimate, we again use the solution $Y$ of the
ordinary differential equation \eqref{ode-poreux}.

\vskip 4pt Let $z^\ep$ be the solution of the Cauchy problem
(recall that $\ve(t,x)$ has been extended on $[0,\infty)\times \R
^N$ in Remark \ref{rem:extend})
\[
 \Qe \quad\begin{cases}
 z_t=\ep \Delta (z^m)-\n\cdot(z\n \ve)+\eun f(z)&\text{in } (0,\infty) \times\R ^N\vspace{3pt}\\
 z(0,x)=u_0(x) &\text{in }\R ^N\,.
 \end{cases}
\]

\begin{lem}[Super-solutions for $\Qe$ for small times]\label{finite-speed}
Choose $K\geq 1$ and $\CGBIS >0$ appropriately. For all $x_0 \in
\partial \Omega _0=
\partial {\rm Supp}\; u_0$, denote by $n_0$ the unit outward normal vector to
$\partial \Omega _0$ at $x_0$. For $t\geq 0$,  $x \in \R^n$,  define  the function
\begin{eqnarray*}
&&z_\ep ^+(t,x):=\nonumber\\
&&K \left[Y\left(\frac{t}{\ep},\,-(\Phi(0,t,x)-x_0)\cdot
n_0+\ep^2\CGBIS(\emuttplus-1)\,;\ep M\right)\right]^+\,.
\end{eqnarray*}
Here $Y(\tau,\xi\,;\delta)$ is the solution of \eqref{ode-poreux},
$\mm$ the slope defined in \eqref{slope}, $\Phi(t_1,t_2,x_3)$ the
flow defined in \eqref{flow} and $M$ is chosen such that, for
$\ep>0$ small enough, $M\geq C_0 \Vert \Delta \ve \Vert
_{L^\infty(\Q)}$. Then, for all $\ep
>0$ small enough,
\begin{equation}\label{initial}
u_0(x)\leq z_\ep ^+(0,x)\quad\text{ for all }\, x\in\R^N\,,
\end{equation}
and
\begin{equation}\label{proof-super}
{\mathcal L}^\ep [z_\ep ^+]:=(z_\ep ^+)_t-\ep \Delta ({z_\ep
^+})^m+\n \cdot(z_\ep ^+\n \ve)-\eun f(z_\ep ^+)\geq 0\,,
\end{equation}
in the domain $[0, t^\ep]\times \R ^N $.
\end{lem}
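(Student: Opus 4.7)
The plan is to invoke the sufficient conditions of Lemma \ref{lemma-sup-poreux} (in its $\R^N$-version, without boundary condition on $\partial\Omega$), which reduces the proof to three items: (a) the initial dominance \eqref{initial}; (b) the pointwise inequality $\mathcal{L}^\ep[z_\ep^+]\geq 0$ on $\{z_\ep^+>0\}$; and (c) the degenerate free-boundary condition $\n((z_\ep^+)^m)=0$ at $\partial\{z_\ep^+>0\}$. The construction mirrors that of the super-solution $w_\ep^+$ in Lemma \ref{g-w}, with two structural changes: the nonlinear profile $u_0\circ\Phi(0,t,\cdot)$ is replaced by the affine function $g\circ\Phi(0,t,\cdot)$ with $g(y):=-(y-x_0)\cdot n_0$ (so that $\n_y g\equiv -n_0$ and $\Delta_y g\equiv 0$), and the whole expression is multiplied by a factor $K\geq 1$.

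For (a), at $t=0$ we have $z_\ep^+(0,x)=K[-(x-x_0)\cdot n_0]^+$. Convexity of $\Omega_0$ and the fact that $n_0$ is the outward normal at $x_0$ force $\overline{\Omega_0}\subset\{(x-x_0)\cdot n_0\leq 0\}$, so $u_0\equiv 0\equiv z_\ep^+(0,\cdot)$ outside this half-space. For $x\in\overline{\Omega_0}$, the outward ray $x+sn_0$ exits $\overline{\Omega_0}$ at some $s^*(x)\leq -(x-x_0)\cdot n_0$ (again by convexity), and the Lipschitz continuity of $u_0$ near $\partial\Omega_0$ (which one may assume thanks to the remark following Assumption \ref{H2}) yields $u_0(x)=u_0(x)-u_0(x+s^*(x)n_0)\leq L s^*(x)\leq L\bigl(-(x-x_0)\cdot n_0\bigr)$; picking $K\geq L$ gives \eqref{initial}.

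For (b), I will replay the derivation of $\mathcal{L}^\ep[w_\ep^+]$ in the proof of Lemma \ref{g-w}, setting $\xi(t,x):=g(\Phi(0,t,x))+\ep^2\CGBIS(\emuttplus-1)$ and evaluating $Y$ and its derivatives at $(t/\ep,\xi(t,x);\ep M)$. On $\{Y>0\}$ we have $Y_\tau=Y(1-Y)+\ep M$ while $f(KY)=KY-K^2Y^2$, yielding the elementary identity $KY_\tau/\ep - f(KY)/\ep = KM + K(K-1)Y^2/\ep$; collecting all terms I expect the decomposition
\[
\mathcal{L}^\ep[z_\ep^+] = \frac{K(K-1)Y^2}{\ep} + K\bigl(M+Y\Delta\ve\bigr) + \ep K Y_\xi\,\tilde E_3.
\]
Here
\[
\tilde E_3 = \CGBIS\,\mu(\ep M)\,\emuttplus + \n\xi\cdot\n v_1^\ep - K^{m-1}\frac{(Y^m)_{\xi\xi}}{Y_\xi}\,|\n\xi|^2 - K^{m-1}\frac{(Y^m)_\xi}{Y_\xi}\,\Delta\xi.
\]
The first bracket is nonnegative because $K\geq 1$; the second by the choice $M\geq C_0\|\Delta\ve\|_\infty$; the crucial cancellation $\xi_t+\n\xi\cdot\n v=\ep\CGBIS\mu(\ep M)\emuttplus$ that produces the third follows from applying the identity \eqref{egalite} at $(0,t,x)$, now with $\n_y g\equiv -n_0$ playing the role of $\n u_0$ in Lemma \ref{g-w}. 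Lemma \ref{properties-Y}(ii)--(iv) bounds the diffusion remainders by $O(K^{m-1}\emuttplus)$, so choosing $\CGBIS$ large (depending on $K$ and $m$) yields $\tilde E_3 \geq 0$. Condition (c) holds because $(z_\ep^+)^m=K^m(Y^+)^m$ with $m\geq 2$ forces $\n((z_\ep^+)^m)$ to vanish continuously at the free boundary $\{Y=0\}$.

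The main novelty---and the only real obstacle---compared to Lemma \ref{g-w} is the bookkeeping around the factor $K\geq 1$: the mismatch between $f(KY)=KY-K^2Y^2$ and $K\tilde f(Y)=KY-KY^2$ generates the \emph{bonus} term $K(K-1)Y^2/\ep$, which is nonnegative precisely because $K\geq 1$, and the same power $K^{m-1}$ multiplies the diffusion remainders inside $\tilde E_3$, which is why $\CGBIS$ must be chosen \emph{after} $K$. Were $K<1$ allowed, the bonus would change sign and the construction would collapse---hence the restriction $K\geq 1$ is both needed for the initial comparison and compatible with the differential inequality.
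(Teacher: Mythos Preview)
Your proposal is correct and follows essentially the same route as the paper: the same decomposition $\mathcal L^\ep[z_\ep^+]=\ep^{-1}K(K-1)Y^2+K(M+Y\Delta v^\ep)+\ep KY_\xi\,\tilde E_3$, the same use of \eqref{egalite} to kill the drift term, and the same choice of $\CGBIS$ large (after $K$) to control $\tilde E_3$ via Lemma \ref{properties-Y}. One minor point: your justification for the initial bound in (a) appeals to ``the remark following Assumption \ref{H2}'' to obtain Lipschitz continuity of $u_0$, but that remark is about \emph{relaxing} regularity via comparison, not gaining it; the paper instead invokes convexity together with \eqref{pente} directly to assert $u_0(x)\leq -K(x-x_0)\cdot n_0$ on $\Omega_0$, which amounts to the same conclusion.
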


\begin{proof} Recall that $\Omega _0$ is convex. Therefore, in view of \eqref{pente}, we can choose $K\geq 1$
sufficiently large so that, for all $x_0 \in
\partial \Omega _0$ and all $x\in \Omega _0$,
\begin{equation}\label{convex}
u_0(x)\leq -K(x-x_0)\cdot n_0\,.
\end{equation}

We prove \eqref{initial}. If $\Phi(0,0,x)=x \notin \Omega _0$ this
is obvious since $u_0(x)=0$. Let us now assume that
$\Phi(0,0,x)=x\in \Omega _0$. Since $\Omega _0$ is convex, it lies
on one side of the tangent hyperplane at $x_0$ so that
$(x-x_0)\cdot n_0 <0$. Recall that $Y(0,\xi\,;\de)=\xi$ so that
$z_\ep ^+(0,x)=-K(x-x_0)\cdot n_0$ and \eqref{initial} follows
from \eqref{convex}.

We now prove \eqref{proof-super}. As in the proof of Lemma
\ref{g-w}, straightforward computations combined with
\eqref{ode-poreux} and \eqref{egalite} yield
\begin{eqnarray*}
&&\ep {\mathcal L}^\ep [z_\ep^+]=Kf(Y)-f(KY)+\ep K(M+Y\Delta
\ve) \\
&&+ \ep ^2 K Y_\xi   \Big\{\CGBIS \mu(\ep M) \, \emuttplus
-D_3\Phi(0,t,x)n_0\cdot \n v_1^\ep(t,x)\\
&&+\Delta _x[\Phi(0,t,x)\cdot n_0]K^{m-1}
\frac{(Y^m)_\xi}{Y_\xi}-|\n _x[\Phi(0,t,x)\cdot n_0]|^2
K^{m-1}\frac{(Y^m)_{\xi\xi}}{Y_\xi}\Big\}\,.
\end{eqnarray*}
Note that $Kf(Y)-f(KY)=K(K-1)Y^2\geq 0$. Then, by using similar
arguments to those in the proof of Lemma \ref{g-w}, we see that
${\mathcal L}^\ep [z_\ep^+]\geq 0 $, if $\CGBIS>0$ is sufficiently
large.
\end{proof}

We now prove \eqref{super-fish}.

\begin{proof}
We shall first prove that property \eqref{super-fish} holds for
$z^\ep$ the solution of the Cauchy Problem $\Qe$. Recall that
$a(\de)$ is the unstable zero of $\tilde f _\de=\tilde f+\de$ so
that $a(\ep M)<0$. Moreover, in view of \eqref{h} and \eqref{mu},
we can choose $M_0>0$ large enough so that, for $\ep >0$ small
enough,
$$
-M_0 \ep+\CGBIS \ep e^{(\mu(\ep M)-1)|\ln \ep|}-\CGBIS \ep
^2<a(\ep M)\,.
$$
For $x \in\Omega$ such
that ${\rm dist}(\Phi(0,t^\ep,x),\Omega _0)\geq M_0 \ep$, we
choose $x_0 \in
\partial \Omega _0$ such that ${\rm dist}(\Phi(0,t^\ep,x),\Omega _0)=\Vert \Phi(0,t^\ep,x)-x_0\Vert$ and define $z_\ep ^+$ as in
Lemma \ref{finite-speed}. It follows from Lemma \ref{finite-speed}
and the comparison principle that, for all $\ep
>0$ small enough, all $(t,x) \in[0, t^\ep]\times \R^N$,
\begin{equation}\label{coincee-gene}
0\leq z^\ep (t,x)\leq z_\ep ^+(t,x)\,.
\end{equation}
Since, for $t=t^\ep= \ep  | \ln \ep |$,
\begin{eqnarray*}
&-&(\Phi(0,t^\ep,x)-x_0)\cdot
n_0+\ep^2\CGBIS(\emuttplusep-1)\\
&&=-\Vert \Phi(0,t^\ep,x)-x_0\Vert+\CGBIS \ep e^{(\mu(\ep
M)-1)|\ln \ep|}-\CGBIS
\ep ^2\\
&&\leq -M_0 \ep+\CGBIS \ep e^{(\mu(\ep M)-1)|\ln \ep|}-\CGBIS
\ep ^2\\
&&<a(\ep M)\,,
\end{eqnarray*}
it follows from Lemma \ref{properties-Y} $(i)$ that $$
Y\left(\frac{t^\ep}{\ep},\,-(\Phi(0,t^\ep,x)-x_0)\cdot
n_0+\ep^2\CGBIS(\emuttplusep-1)\,;\ep M\right)<a(\ep M)<0\,,$$ and
therefore $z_\ep^+(t^\ep,x)=0$, which in turn implies
$z^\ep(t^\ep,x)=0$. Hence \eqref{super-fish} holds for $z^\ep$ the
solution of $\Qe$.

Now, a straightforward modification of \cite[Corollary 4.1]{HKLM}
shows that there exists $\tilde T>0$ such that, for all $\ep >0$
small enough,
$$
\ue(t,x)=z^\ep(t,x)\,,
$$
for all $(t,x)\in(0,\tilde T)\times \Omega$. This proves
\eqref{super-fish} for $\ue$ the solution of $\Pe$.
\end{proof}

\section{The propagating front}\label{s:motion}

The goal of this section is to construct efficient sub- and
super-solutions that control $\ue$ during the latter time range,
when the motion of interface occurs. We begin with some
preparations.

\subsection{Materials}\label{ss:materials}

In the linear diffusion case ($m=1$), it is well-known that the
equation $u_t=\Delta u +u(1-u)$ admits travelling wave solutions
with some semi-infinite interval of admissible wave speed. The
same property holds for the nonlinear diffusion case, namely
equation $u_t=\Delta (u^m)+u(1-u)$, $m>1$. Nevertheless, it turns
out that the travelling wave with minimal speed $c^*>0$ is both
compactly supported from one side and sharp. In the following, $U$
denotes the unique solution of
\begin{equation}\label{tw}
\begin{cases}
(U^m)''(z)+c^* U'(z)+U(z)(1-U(z))=0\quad \text{ for all } z\in \R\\
 U(-\infty)=1\\
  U(z)>0 \quad\text{ for all } z<0\\
 U(z)=0\quad \text{ for all } z\geq 0\,.
 \end{cases}
\end{equation}

\begin{lem}[Behavior of $U$]\label{behavior-tw}For all
$z\in(-\infty,0)$ we have $U'(z)<0$. The travelling wave $U$ is
smooth outside 0 and
$$
U'(0)\quad\begin{cases}
 =0 &\text{ if }\;1<m<2\vspace{3pt}\\
 \in(-\infty,0) &\text{ if }\;m=2\vspace{3pt}\\
 =-\infty &\text{ if }\;m>2\,.
 \end{cases}
 $$
Moreover, there exist $C>0$ and $\beta >0$ such that the following
properties hold.
\begin{equation}\label{prop1}|(U^m)'(z)|\leq C U(z)\,\quad\text{
for all }\, z\in \R\,,
\end{equation}
\begin{equation}\label{prop2}0< 1-U(z)\leq Ce^{-\beta |z|}\,\quad\text{
for all }\, z \leq 0\,,
\end{equation}
\begin{equation}\label{prop3} |zU'(z)|\leq CU(z)\,\quad\text{
for all }\, z \leq -1\,.
\end{equation}
\end{lem}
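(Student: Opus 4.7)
The plan is to work entirely in the phase plane. Set $V := (U^m)'$ and rewrite \eqref{tw} as the planar system
\begin{equation*}
\frac{dU}{dz} = \frac{V}{mU^{m-1}}, \qquad \frac{dV}{dz} = -\frac{c^*V}{mU^{m-1}} - U(1-U),
\end{equation*}
whose equilibrium $(1,0)$ linearises to a saddle with eigenvalues $\lambda_\pm$ satisfying $\lambda_+ > 0 > \lambda_-$; the travelling wave corresponds to the unstable branch leaving into $\{U<1,\, V<0\}$. Writing the trajectory as $V = -P(U)$ with $P \geq 0$, one obtains the scalar equation $dP/dU = c^* - mU^m(1-U)/P$. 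As $P \downarrow 0$ with $U \in (0,1)$, its right-hand side tends to $-\infty$, which forbids the trajectory from returning to $\{P = 0\}$ inside $(0,1)$. Hence $P(U) > 0$ on $(0,1)$, which gives $U'(z) < 0$ on $(-\infty,0)$, and smoothness on $(-\infty,0)$ then follows from standard ODE theory in the region $\{U>0\}$.

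Next I would pin down the behaviour at $z=0$. Because $U$ is the minimal-speed sharp wave, the trajectory reaches $(0,0)$ in finite $z$-time, so $P(U) \to 0$ as $U \to 0^+$. Inserting the ansatz $P(U) = c^*U + o(U)$ into $dP/dU = c^* - mU^m(1-U)/P$ is the only consistent leading-order balance, and a barrier/comparison argument trapping $P(U)/U$ between $c^* \pm \eta$ for $U$ small yields $\lim_{U\to 0^+} P(U)/U = c^*$. Translating back, the identity $(U^m)' = -P$ gives $U' \sim -\frac{c^*}{m}\,U^{2-m}$ as $z\to 0^-$; a direct integration produces $U(z) \sim \bigl(\frac{c^*(m-1)}{m}|z|\bigr)^{1/(m-1)}$ and therefore $U'(z) \sim -C\,|z|^{(2-m)/(m-1)}$ with $C > 0$. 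The sign of the exponent $(2-m)/(m-1)$ delivers exactly the announced trichotomy: $U'(0) = 0$ if $1 < m < 2$, a finite negative value (in fact $-c^*/2$) if $m = 2$, and $-\infty$ if $m > 2$.

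Finally I would turn to the three bounds. For \eqref{prop1}, the ratio $P(U)/U$ is continuous on $(0,1)$, tends to $c^*$ as $U\to 0^+$ by the preceding step, and tends to $0$ as $U\to 1^-$ (from the saddle linearisation, where $P \sim m\lambda_+(1-U)$); it is therefore bounded on $(0,1)$, yielding $|(U^m)'| \leq CU$. For \eqref{prop2}, the unstable manifold at $(1,0)$ gives $1 - U(z) \sim A e^{\lambda_+ z}$ as $z \to -\infty$, handling $z \leq z_0$ for some fixed $z_0 < 0$; on $[z_0,0]$ the trivial bound $1 - U \leq 1 \leq e^{-\beta z_0} e^{\beta z}$ with $\beta = \lambda_+$ closes the estimate. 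For \eqref{prop3}, monotonicity forces $U(z) \geq U(-1) > 0$ on $(-\infty, -1]$ while the same linearisation forces $|U'(z)| \leq Ce^{\lambda_+ z}$, so $|zU'(z)|$ is bounded on $(-\infty,-1]$ and hence controlled by a multiple of $U(z)$. The main obstacle I expect is the rigorous proof that $P(U)/U \to c^*$, since this is the specific property that singles out the minimal-speed sharp wave from faster classical (non-compactly supported) fronts and drives the entire $z=0$ trichotomy.
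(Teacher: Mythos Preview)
The paper does not actually prove this lemma: immediately after the statement it writes ``For more details and proofs we refer the reader to \cite{Atk-Reu-Rid}, \cite{Bir}, \cite{HKLM}, as well as to \cite{mama}''. So there is no in-paper argument to compare against; the relevant benchmark is the phase-plane analysis carried out in those references, and your proposal follows precisely that route.

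Your sketch is correct in all essential points. The reduction to $V=(U^m)'$, the saddle structure at $(1,0)$, the scalar equation $dP/dU = c^* - mU^m(1-U)/P$ for $P=-V$, and the blow-up argument excluding $P=0$ inside $(0,1)$ are all standard and right. The asymptotic $P(U)\sim c^* U$ as $U\to 0^+$ is indeed the crux that distinguishes the sharp minimal-speed front; your barrier idea works, and one can also argue directly that $\ell:=\limsup_{U\to 0^+} P(U)/U$ and the corresponding $\liminf$ both satisfy $\ell = c^*$ by integrating $dP/dU$ (the nonlinear term $mU^{m-1}(1-U)/\,(P/U)$ is $o(1)$ since $m>1$). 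From there your integration giving $U(z)\sim\big(\tfrac{c^*(m-1)}{m}|z|\big)^{1/(m-1)}$ and the resulting trichotomy for $U'(0^-)$ are exactly as in \cite{Atk-Reu-Rid} and \cite{HKLM}. The three bounds \eqref{prop1}--\eqref{prop3} follow as you indicate: boundedness of $P(U)/U$ on $(0,1)$ (using the two endpoint limits $c^*$ and $0$), the exponential tail from the unstable manifold at $(1,0)$, and the combination of $U\geq U(-1)>0$ with $|zU'(z)|\to 0$ on $(-\infty,-1]$. Nothing is missing.
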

For more details and proofs we refer the reader to
\cite{Atk-Reu-Rid}, \cite{Bir}, \cite{HKLM}, as well as to
\cite{mama} for related results.

\vskip 4pt  Another ingredient is a \lq\lq cut-off signed distance
function" $d^\ep (t,x)$ which is defined as follows. Let
$\widetilde d^\ep=\widetilde d\, ^{\ep,\text{drift}}$ be the
signed distance function to $\Gammadrift$, the smooth solution of
the free boundary problem $\drift$, namely
\begin{equation}\label{eq:dist}
\widetilde d ^\ep (t,x):=
\begin{cases}
-&\hspace{-10pt}{\rm dist}(x,\Gammadrift_t)\quad\text{ for }x\in\Omegadrift  _t \\
&\hspace{-10pt} {\rm dist}(x,\Gammadrift _t) \quad \text{ for }
x\in\Omega \setminus \overline{\Omegadrift  _t}\,,
\end{cases}
\end{equation}
where ${\rm dist}(x,\Gammadrift _t)$ is the distance from $x$ to
the hypersurface  $\Gammadrift  _t$. We remark that $\widetilde
d^\ep=0$ on $\Gammadrift $ and that $|\nabla \widetilde d^\ep|=1$
in a neighborhood of $\Gammadrift$: there exists $d_0
>0$ such that, for all $\ep >0$ small enough, $|\n \widetilde
d^\ep (t,x)|=1$ if $|\widetilde d^\ep (t,x)|<2d_0$. By reducing
$d_0$ if necessary we can assume that $\widetilde d^\ep$ is smooth
in $ \{(t,x) \in
 [0,T]\times \overline \Omega :\,|\widetilde d^\ep (t,x)|<3
 d_0\}\,$.

 Next, let $\zeta(s)$ be a smooth increasing function on $\R$ such
that
\[
 \zeta(s)= \left\{\begin{array}{ll}
 s &\textrm{ if }\ |s| \leq d_0\vspace{4pt}\\
 -2d_0 &\textrm{ if } \ s \leq -2d_0\vspace{4pt}\\
 2d_0 &\textrm{ if } \ s \geq 2d_0\,.
 \end{array}\right.
\]
We then define the cut-off signed distance function $d^\ep= d\,
^{\ep,\text{drift}}$ by
\begin{equation}
d^\ep(t,x):=\zeta\left(\widetilde d^\ep(t,x)\right)\,.
\end{equation}

Note that
\begin{equation}\label{norme-un}
\text{ if } \quad |d^\ep(t,x)|< d_0 \quad \text{ then }\quad
|\nabla d^\ep(t,x)|=1\,,
\end{equation}
and that the equation of motion $\drift$ is recast as
\begin{equation}\label{interface}
  \left(d^\ep _t+c^*+\n d^\ep\cdot \n v\right) (t,x)=0 \quad\
 \textrm{ on}\ \; \Gammadrift _t=\{x \in \Omega :\,
d^\ep(t,x)=0\}\,.
\end{equation}
Moreover, there exists a constant $D>0$ such that, for all $\ep
>0$ small enough,
\begin{equation}\label{est-dist}
|\nabla d^\ep (t,x)|+|\Delta d^\ep (t,x)|\leq D\quad \textrm{ for
all } (t,x) \in \overline {\Q}\,.
\end{equation}
Finally, in view of \eqref{norme-un} and \eqref{interface}, the
mean value theorem provides a constant $N>0$ such that, for all
$\ep
>0$ small enough,
\begin{equation}\label{MVT}
\left| d^\ep_t+c^*|\n d^\ep |^2+\n d^\ep\cdot \n v\right|(t,x)\leq
N|d^\ep(t,x)| \quad \textrm{ for all } (t,x) \in \overline {\Q}\,.
\end{equation}

\subsection{Sub- and super-solutions}

We  define
\begin{equation}\label{sup-sol}
u_\ep ^ \pm(t,x):=(1\pm q(t) )U\left(\frac{d^\ep(t,x)\mp \ep
p(t)}\ep\right)\,,
\end{equation}
where
$$
\begin{array}{ll}
p(t):=-e^{-t/\ep}+e^{Lt}+K\,\vsp\\
q(t):=\sigma(e^{-t/\ep}+\ep L e^{Lt})\,,
\end{array}
$$
and where $U$ and $d^\ep$ were defined in subsection
\ref{ss:materials}. In the following lemma, $\support[u_\ep
^\pm]$, $\gammasupport[u_\ep ^\pm]$, $\Gamma ^{supp}[u_\ep ^\pm]$
and $\nusupport[u_\ep ^\pm]$ are defined as in Lemma
\ref{lemma-sup-poreux}.

\begin{lem}[Sub- and super-solutions for the propagating front]\label{super-sub-motion1}
Choose $\sigma >0$ small enough so that
\begin{equation}\label{sigma}
c^*(m-1)D^2(1+2\sigma)^{m-2}\sigma \leq \frac 1 2\,,
\end{equation}
where $D$ is the constant that appears in \eqref{est-dist}. Choose
$K\geq 1$. Then, if $L>0$ is large enough, we have, for $\ep >0$
small enough,
\begin{equation}\label{sup}
 \mathcal L ^\ep  [u_\ep ^-]\leq 0 \leq  \mathcal L ^\ep  [u_\ep ^+]\, \quad
\text{ in }\,  (0,T) \times\Omega\,,
\end{equation}
\begin{equation}\label{bords-sub}
\frac{\partial (u_\ep ^-)^m}{\partial \nusupport[u_\ep ^-]}=0\;
\text{ on }\
\partial \support[u_\ep ^-]\,\; \text{ for all }\ t\in[0,T]\,,
\end{equation}
\begin{equation}\label{bords-sup}
\frac{\partial (u _\ep ^+)^m}{\partial \nusupport[u_\ep ^+]}=0\;
\text{ on }\
\partial \support[u _ \ep ^+]\,\; \text{ for all }\ t\in[0,T]\,.
\end{equation}
\end{lem}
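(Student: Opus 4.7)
The plan is to verify the sufficient conditions of Lemma \ref{lemma-sup-poreux} for the pair $(u_\ep^-, u_\ep^+)$. Since $U(z) > 0$ for $z<0$ and $U(z)=0$ for $z\ge 0$, the support of $u_\ep^\pm$ equals $\{x:\, d^\ep(t,x)\mp \ep p(t)<0\}$, and on its boundary the rescaled argument $\xi := (d^\ep\mp\ep p)/\ep$ tends to $0^-$. The continuity of $\n((u_\ep^\pm)^m)$ and the flux conditions \eqref{bords-sub}--\eqref{bords-sup} then follow at once from the estimate $|(U^m)'(\xi)|\le C\, U(\xi)$ in \eqref{prop1}, which forces $\n((u_\ep^\pm)^m)$ to vanish at $\partial \support[u_\ep^\pm]$.

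For the PDE inequality \eqref{sup} I would treat only $u_\ep^+$, the sub-solution case being symmetric. Direct differentiation combined with the traveling wave identity $(U^m)''(\xi) = -c^* U'(\xi)- U(\xi)(1-U(\xi))$ from \eqref{tw} gives, after regrouping,
\[
\ep \mathcal{L}^\ep[u_\ep^+] \;=\; (1+q)\,U'(\xi)\,A \;+\; U(\xi)\,B \;+\; U^2(\xi)\,C \;+\; R,
\]
where $A := d^\ep_t + c^*(1+q)^{m-1}|\n d^\ep|^2 + \n d^\ep \cdot \n v - \ep p'(t)$ is the eikonal factor, $B$ bundles $\ep q'$ with $[(1+q)^m|\n d^\ep|^2 -(1+q)]$ and $\ep(1+q)\Delta v$, the term $C := (1+q)^2 - (1+q)^m|\n d^\ep|^2$ accounts for the quadratic reaction defect, and $R$ collects the $O(\ep)$ remainders (curvature $\ep(1+q)^m (U^m)'(\xi)\Delta d^\ep$ and drift $\ep(1+q)U'(\xi) \n d^\ep \cdot \n v_1^\ep$).

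The analysis splits according to position relative to the interface. In the near-interface region $\{|d^\ep|<d_0\}$ one has $|\n d^\ep|=1$ by \eqref{norme-un}, and the motion law \eqref{interface} combined with \eqref{MVT} reduces the eikonal factor to $A = O(|d^\ep|) + O(q) - \ep p'(t)$; since $U' < 0$ and $p'(t) \ge L e^{Lt}>0$, the term $-\ep(1+q)U'(\xi) p'(t)$ is a positive leading contribution that, for $L$ large enough, dominates the $O(|d^\ep|)\,U'(\xi)$ error via the decay estimate \eqref{prop3} for $|\xi U'(\xi)|$, together with the curvature and $\ep v_1^\ep$ perturbations in $R$ and the $U^2$-coefficient $C$ controlled by \eqref{sigma}. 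Far from the interface, $|\xi|\gtrsim 1/\ep$ and \eqref{prop2} forces either $U(\xi)\equiv 0$ (trivially fine) or $1 - U(\xi)$ to be exponentially small; in the latter case the reaction contribution $-\ep^{-1}(1+q)U(1-(1+q)U)$ recovers a favorable sign thanks to the smallness of $\sigma$ enforced by \eqref{sigma}.

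The main obstacle will be the quantitative bookkeeping in the near-interface regime: exhibiting the positive term $-\ep(1+q)U'(\xi) p'(t)$ as the dominant contribution over a sum of perturbations of the same order $\ep$, namely the curvature and drift terms in $R$, the nonlinear-in-$q$ corrections $[(1+q)^m-1-mq]\,U$ and $[(1+q)-(1+q)^2]\,U^2$, and the residual from \eqref{MVT}. The design $q = \sigma\,\ep\, p'(t)$, together with the fast mode $e^{-t/\ep}$ (which absorbs the initial-time error coming from $q(0)\neq 0$) and the slow mode $e^{Lt}$ (which produces an arbitrarily large coefficient through $L$), is precisely what makes this absorption possible: one first fixes $\sigma$ to satisfy \eqref{sigma}, and then chooses $L$ large enough depending on $\sigma$, $K$, $D$, $N$ and the uniform bounds on $v$ and $v_1^\ep$.
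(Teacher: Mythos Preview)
Your sketch follows the same architecture as the paper's proof (same decomposition via the travelling-wave identity, same use of \eqref{prop1}, \eqref{prop3}, \eqref{MVT}, and the boundary conditions do follow immediately from $(U^m)'(0)=0$). Two attributions need correcting, however, and both matter if one actually writes out the estimates.

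First, $-\ep(1+q)U'(\xi)\,p'(t)$ cannot be the global positive leading contribution: it carries the factor $U'$, which vanishes as $\xi\to-\infty$, whereas the curvature error $\ep(1+q)^m(U^m)'\Delta d^\ep$ and the $\ep CU$ residual produced by \eqref{prop3} are of order $\ep U$ and do not decay there. What absorbs these $U$-type errors is the reaction correction hidden in your $B$ term: since $U<1$ and $m\ge 2$, one has $-(1+q)+(1+q)^m+U\big[(1+q)^2-(1+q)^m\big]\ge q$, and $q+\ep q'=\sigma\ep L e^{Lt}(1+\ep L)\ge \sigma\ep L$, which for $L$ large swallows every $O(\ep)U$ remainder. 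The paper makes the $U'$-part clean by splitting into $0\le d^\ep\le \ep p$ (where it shows the whole eikonal bracket $E_1^\star\le 0$, hence $(1+q)U'E_1^\star\ge 0$ without ever estimating $|U'|$ --- essential since $|U'(0^-)|=\infty$ when $m>2$) and $d^\ep\le 0$ (where $\xi\le -K\le -1$, so \eqref{prop3} converts $|E_1|$ into $\ep C(1+q)U$).

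Second, \eqref{sigma} does not control your $U^2$-coefficient $C$; it is used inside the eikonal factor $A$, to guarantee $c^*\big[(1+q)^{m-1}-1\big]|\n d^\ep|^2\le c^*(m-1)D^2(1+2\sigma)^{m-2}q\le \tfrac12\,\ep p'$, so that this nonlinear correction does not overrun the shift $-\ep p'$. The $U^2$-coefficient is handled not by \eqref{sigma} but by the simple inequality above combining $BU+CU^2$.
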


\begin{proof} Properties \eqref{bords-sub} and \eqref{bords-sup} follow from
$(U^m)'(0)=0$ (see \eqref{prop1}). We prove below that $\mathcal L
^\ep [u_\ep ^+]\geq 0$, the proof of $\mathcal L ^\ep  [u_\ep
^-]\leq 0$ following the same lines.  Note that we only need to
consider the case $d^\ep(t,x)\leq \ep p(t)$ since, if
$d^\ep(t,x)>\ep p(t)$ then $u _ \ep ^+(t,x)=0$. Straightforward
computations and equation \eqref{tw} yield
$$
\begin{array}{lll}
\ep(u_\ep^+)_t=\ep q'U+(1+q)(d^\ep_t-\ep p')U'\vsp \\
\ep \n u_\ep ^+=(1+q) U'\n d^\ep  \vsp\\
 \ep ^2 \Delta
(u_\ep^+)^m=(1+q)^m|\n d^\ep| ^2(-c^*U'-U(1-U))+\ep(1+q)^m\Delta
d^\ep (U^m)' \,,
\end{array}
$$
where arguments are omitted. Thus we get
$$
\ep \mathcal L ^\ep [u _ \ep ^+]=E_1+E_2+E_3\,,
$$ where
\vsp\\
$ E_1=U'(1+q)\left[d^\ep _t - \ep p'+c^*(1+q)^{m-1}|\n d^\ep|^2+\n d^\ep\cdot \n \ve\right]=:U'(1+q)E_1^\star$\vsp \\
$ E_2=U\left\{-(1+q)+(1+q) ^m|\n d^\ep|^2+U\left[(1+q)^2-(1+q)^m|\n d^\ep|^2\right]+\ep q'\right\}$\vsp \\
$ E_3=-\ep (1+q)^m \Delta d^\ep (U^m)'+\ep (1+q) \Delta \ve U$\,.\vsp\\
In the sequel we define $a(t):=1+q(t)$ and denote by $C_i$ various
positive constants which do not depend on $\ep$.

\vskip 4pt Since $\Vert \Delta \ve \Vert _{L ^\infty(\Q)}$ is
uniformly bounded w.r.t. $\ep >0$ (see Assumption \ref{H3}), we
deduce from \eqref{est-dist} and \eqref{prop1} that $|E_3|\leq \ep
C_3 (a^m+a) U$ so that
\begin{multline}\label{mult}
E_2+E_3\geq U\Big\{-a+a^m+U\left(a^2-a^m\right)-\ep C_3
a^m-\ep C_3 a\\
+\left(|\n d^\ep| ^2-1\right)a^m(1-U)+\ep q'\Big\}\,.
\end{multline}
We claim that, for $\ep >0$ small enough,
\begin{equation}\label{truc}
|(|\n d^\ep|^2-1)(1-U)|\leq \ep C_2\,.
\end{equation}
Indeed, if $-d_0<d^\ep(t,x)\leq \ep p(t)$, it follows from
\eqref{norme-un} that, for $\ep >0$ small enough, $|\n d^\ep
(t,x)|=1$. Next, if $d^\ep(t,x)\leq -d_0$, \eqref{prop2} implies
that
$$
0<(1-U)\left(\frac{d^\ep(t,x)-\ep p(t)}\ep\right)\leq
(1-U)(-\frac{d_0}\ep) \leq C e^{-\beta\frac{d_0}\ep}\,,
$$
and \eqref{truc} holds for $\ep>0$ small enough. Therefore
\eqref{mult} and \eqref{truc} imply
\begin{equation}
E_2+E_3\geq U\left\{-a+a^m+U\left(a^2-a^m\right)-\ep (C_2+ C_3)
a^m-\ep C_3 a+\ep q'\right\}\,.
\end{equation}

\vskip 4pt Next, since
$$
E_1^\star=d^\ep_t+c^*|\n d^\ep| ^2 +\n d^\ep\cdot \n v-\ep
p'+c^*(a^{m-1}-1)|\n d^\ep| ^2+\ep \n d^\ep \cdot \n v_1 ^\ep\,,
$$
using \eqref{MVT}, \eqref{est-dist} and Assumption \ref{H3}, we
see that \begin{eqnarray*}
E_1^\star &\leq& N|d^\ep(t,x)|-\ep p'(t)+c^*(a^{m-1}-1)|\n d^\ep|^2+\ep \n d^\ep \cdot \n v_1 ^ \ep\\
&\leq& N|d^\ep(t,x)-\ep p(t)|+\ep
(Np(t)-p'(t))+c^*(a^{m-1}-1)D^2+\ep CD\\
&\leq& N|d^\ep(t,x)-\ep p(t)|+\ep
(Np(t)-p'(t))\\
&{}& +c^*(m-1)D^2(1+2\sigma)^{m-2} q(t)+\ep  CD\,.
\end{eqnarray*}
The last inequality above comes from the fact that, for $\ep>0$
small enough, we have $0\leq q(t)\leq \sigma (1+\ep L e^{LT})\leq
2\sigma$, which in turn implies that
\begin{equation}\label{q-borne}
0\leq a^{m-1}-1\leq (m-1)(1+2\sigma)^{m-2} q(t)\,.
\end{equation}
In the following, we distinguish two cases.

First, assume that $0\leq d^\ep(t,x) \leq \ep p(t)$ so that, for
$\ep
>0$ small enough,
\begin{eqnarray*}
E_1^\star&&\leq \ep (2Np(t)-p'(t))+c^*(m-1)D^2(1+2\sigma)^{m-2} q(t)+\ep  CD\\
&&\leq e^{-t/\ep}(-\ep
2N-1+c^*(m-1)D^2(1+2\sigma)^{m-2}\sigma)\\
&&\quad+e^{Lt}(\ep 2N -\ep L+\ep
c^*(m-1)D^2(1+2\sigma)^{m-2}\sigma L)+\ep 2N K+\ep CD\,.
\end{eqnarray*}
In view of \eqref{sigma} we get
$$
E_1^\star \leq \ep\left(e^{Lt}(2N-\frac 12 L)+2NK+CD\right)\leq
0\,,
$$
if $L>0$ is sufficiently large. This implies that
$E_1=aU'E_1^\star\geq 0$.

Now, assume that $d^\ep(t,x)\leq 0$ so that
\begin{equation}\label{moinsun}
\frac {d^\ep(t,x)-\ep p(t)}\ep \leq -K \leq -1\,.
\end{equation}If
$E_1^\star \leq 0$ the conclusion $E_1\geq 0$ follows. Let us now
assume $E_1^\star > 0$. The above study for the case $0\leq
d^\ep(t,x)\leq \ep p(t)$ implies a fortiori that
$$\ep(Np(t)-p'(t))+c^*(m-1)D^2(1+2\sigma)^{m-2} q(t)+\ep  CD\leq 0\,.
$$
Therefore
$$
|E_1^\star|\leq N|d^\ep(t,x)-\ep p(t)|\,,
$$
and we deduce from \eqref{moinsun} and \eqref{prop3} that
$$
|E_1|\leq \ep C_1 a U\,.
$$

\vskip 4pt Summarizing, we obtain that, in any cases,
$$
\ep \mathcal L ^\ep [u _\ep ^+]\geq
 U\left\{-a+a^m+U\left(a^2-a^m\right)-\ep C_ 4  a^m+\ep
q'\right\}\,,
$$
since $a=1+q> 1$ and with $C_4:=C_1+C_2+2C_3$. Since $U<1$, $a> 1$
and $m\geq 2$, the inequality $-a+a^m+U\left(a^2-a^m\right) \geq
-a+a^2=q+q^2\geq q$ holds. Therefore, using $|a|\leq 1+2\sigma$
and substituting the expressions for $q(t)$ and $q'(t)$, we see
that
$$
\begin{array}{ll}
\ep \mathcal L ^\ep [u _ \ep ^+]&\geq
U\left\{\ep \sigma L e^{Lt}-\ep C_4 (1+2\sigma)^m+\sigma \ep ^2 L ^2e^{Lt}\right\}\vsp\\
&\geq U\ep \left\{\sigma L-C_4 (1+2\sigma)^m\right\}\vsp\\
&\geq 0\,,
\end{array}
$$
if $L>0$ is sufficiently large.

This completes the proof of Lemma \ref{super-sub-motion1}.
\end{proof}

\section{Proof of Theorem \ref{th:results}}\label{s:proof}

By fitting the sub- and super-solutions for the generation into
the ones for the motion, we are now in the position to prove our
main result.

\vskip 4pt Let $\eta \in (0,1/2)$ be arbitrary. Choose $\sigma$
that satisfies \eqref{sigma} and
\begin{equation}\label{eta}
\sigma \leq \frac \eta 2\,.
\end{equation}
By Theorem \ref{th-gen-poreux}, there exists $M_0>0$ such that
\eqref{facile}, \eqref{sub-fisher} and \eqref{super-fish} hold
with the constant $\eta$ replaced by $\sigma /2$. Recall that
$u_\ep ^\pm$ are the sub- and super-solutions constructed in
\eqref{sup-sol}.

\begin{lem}[Ordering initial data]\label{condition-initiale2}
There exists $\tilde K >0$ such that for all $K\geq \tilde K$, all
$L>0$, all $\ep>0$ small enough, we have
\begin{equation}
u_\ep ^-(0,x)\leq\ue(t^\ep, x)\leq u_\ep ^+(0,x)\,,
\end{equation}
for all $x\in  \Omega$.
\end{lem}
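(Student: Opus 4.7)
The plan is to partition $\Omega$ according to the value of $d^\ep(0,x)$ relative to $\pm\ep K$ and, in each piece, invoke one of the three conclusions of Theorem \ref{th-gen-poreux} (which has already been applied with $\eta$ replaced by $\sigma/2$ to furnish $M_0$). First I would note that $u_\ep^+(0,x)=(1+\sigma(1+\ep L))U\!\left(\frac{d^\ep(0,x)-\ep K}{\ep}\right)$ vanishes exactly on $\{d^\ep(0,x)\ge \ep K\}$, while $u_\ep^-(0,x)=(1-\sigma(1+\ep L))U\!\left(\frac{d^\ep(0,x)+\ep K}{\ep}\right)$ vanishes exactly on $\{d^\ep(0,x)\ge -\ep K\}$; that $u_\ep^-(0,\cdot)\le 1-\sigma<1-\sigma/2$ everywhere; and that, thanks to the exponential estimate \eqref{prop2}, I can fix $C'=C'(\sigma)>0$ such that $(1+\sigma)U(-C')\ge 1+\sigma/2$.

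The key quantitative input will be that, since $t^\ep=\ep|\ln\ep|\to 0$, the diffeomorphism $F:=\Phi(t^\ep,0,\cdot)$, which by construction sends $\Gamma_0$ onto $\intdrift$ and $\Omega_0$ onto $\Omegadrift_0$, is $\mathcal{O}(\ep|\ln\ep|)$-close to the identity in $C^1$ norm. A Gr\"onwall argument on \eqref{lagrange}, using the uniform bound on $\n v$, shows that both $F$ and $F^{-1}=\Phi(0,t^\ep,\cdot)$ are bi-Lipschitz with constants arbitrarily close to $1$ for $\ep$ small, so that
\[
\tfrac12\operatorname{dist}(x,\intdrift)\;\le\;\operatorname{dist}(\Phi(0,t^\ep,x),\Gamma_0)\;\le\;2\operatorname{dist}(x,\intdrift),
\]
together with the analogous estimate between $\Omega_0$ and $\Omegadrift_0$.

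For the lower bound I would observe that the only nontrivial region is $\{d^\ep(0,x)<-\ep K\}$, where $x\in\Omegadrift_0$ and $\operatorname{dist}(x,\intdrift)\ge \ep K$; then $y:=\Phi(0,t^\ep,x)\in\Omega_0$ satisfies $\operatorname{dist}(y,\Gamma_0)\ge \ep K/2$. Combining the slope condition \eqref{pente} with a first-order Taylor expansion of $\widetilde u_0$ at the projection of $y$ on $\Gamma_0$, and using the strict positivity of $u_0$ on compact subsets of $\Omega_0$, yields constants $c_0,d'_0>0$ with $u_0(y)\ge c_0\min(\operatorname{dist}(y,\Gamma_0),d'_0)$ for every $y\in\Omega_0$. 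Choosing $K\ge 2M_0/c_0$ then gives $u_0(y)\ge M_0\ep$ for $\ep$ small, so Theorem \ref{th-gen-poreux}(ii) delivers $\ue(t^\ep,x)\ge 1-\sigma/2> u_\ep^-(0,x)$. For the upper bound I would split at the threshold $\{d^\ep(0,x)=\ep(K-C')\}$: below the threshold, monotonicity of $U$ gives $u_\ep^+(0,x)\ge (1+\sigma)U(-C')\ge 1+\sigma/2\ge\ue(t^\ep,x)$ by Theorem \ref{th-gen-poreux}(i); above it, the flow estimate provides $\operatorname{dist}(\Phi(0,t^\ep,x),\Omega_0)\ge \ep(K-C')/2\ge M_0\ep$ as soon as $K\ge 2M_0+C'$, whereupon Theorem \ref{th-gen-poreux}(iii) forces $\ue(t^\ep,x)=0\le u_\ep^+(0,x)$. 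Setting $\tilde K$ to be the maximum of the thresholds imposed above completes the argument.

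The step that requires the most care, and that I expect to be the main obstacle, is transferring the slope assumption \eqref{pente}---which a priori only controls $u_0$ infinitesimally near $\Gamma_0$---into the uniform estimate $u_0(\Phi(0,t^\ep,x))\ge M_0\ep$ throughout the inner thin layer $\{d^\ep(0,x)\le -\ep K\}$. This rests on a quantitative near-identity property of $\Phi(t^\ep,0,\cdot)$ at the $\mathcal{O}(\ep)$ scale; once that is established, both inequalities of the lemma reduce to straightforward algebra on the explicit formulas defining $u_\ep^\pm(0,\cdot)$.
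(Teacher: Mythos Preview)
Your proof is correct and follows essentially the same strategy as the paper's: partition $\Omega$ according to the sign and size of $d^\ep(0,x)$, invoke the three conclusions of Theorem~\ref{th-gen-poreux} (with $\eta$ replaced by $\sigma/2$), and use the flow to convert between distances to $\intdrift$ and to $\Gamma_0$. The only differences are cosmetic: the paper packages the flow comparison into two abstract claims, \eqref{ineg2} and \eqref{ineg3}, invoking ``the definition of $\intdrift$ and the compactness of $\Gamma_0$'', whereas you make this explicit via the bi-Lipschitz estimate for $\Phi(0,t^\ep,\cdot)$; and for the upper bound the paper fixes a threshold $K_1\ep$ independent of $K$ and then takes $K\gg K_1$ so that $U(K_1-K)$ is close to~$1$, while you fix $C'$ with $(1+\sigma)U(-C')\ge 1+\sigma/2$ and split at $\ep(K-C')$. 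Both organizations lead to the same conclusion with the same ingredients.
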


\begin{proof}
We first prove
\begin{equation}\label{goal}
u_\ep ^-(0,x)=(1-\sigma -\ep \sigma L)U\left(\frac{d^\ep(0,x)+
K\ep }\ep\right)\leq \ue(t^\ep, x)\,.
\end{equation}
If $x$ is such that $d^\ep(0,x)\geq -K\ep$, this is obvious since
the definition of $U$ then implies $u_\ep ^-(0,x)=0$. Next assume
that $x$ is such that $d^\ep(0,x)< -K \ep$. Let us denote by
$d(t,x)$ the signed distance function to $\Gamma _t$. Note that,
in view of hypothesis \eqref{pente}, the mean value theorem
provides the existence of a constant $\tilde K_0>0$ such that
\begin{equation}\label{ineg}
\text{ if } \quad d(0,y)\leq - \tilde K_0 \ep \quad \text{ then }
\quad u_0(y)\geq M_0 \ep\,.
\end{equation}
Moreover in view of the definition of $\intdrift$ in
\eqref{drifted-interface} and the compactness of $\Gamma _0$,
there exists $K_0
>0$ such that, for $\ep >0$ small enough,
\begin{equation}\label{ineg2}
\text{ if } \quad d^\ep(0,x)\leq -  K_0 \ep \quad \text{ then }
\quad d(0,\Phi(0,t^\ep,x)) \leq - \tilde K _0 \ep\,.
\end{equation}
Hence, if we choose $K \geq K_0$, we deduce from \eqref{ineg2},
\eqref{ineg} and \eqref{sub-fisher} (with $\eta$ replaced by
$\sigma /2$) that $\ue(t^\ep,x)\geq 1 - \frac \sigma 2$. Since
$U\leq 1$, this proves \eqref{goal}.

Next we prove
\begin{equation}\label{goal2}
\ue(t^\ep, x)\leq (1+\sigma+\ep\sigma L)U\left(\frac{d^\ep(0,x)-
K\ep }\ep\right)=u_\ep ^+(0,x)\,.
\end{equation}
In view of the definition of $\intdrift$ in
\eqref{drifted-interface} and the compactness of $\Gamma _0$,
there exists $K_1 >0$ such that
\begin{equation}\label{ineg3}
\text{ if } \quad d^\ep(0,x)\geq  K_1 \ep \quad \text{ then }
\quad {\rm dist}(\Phi(0,t^\ep,x),\Omega _0) \geq M _0 \ep\,.
\end{equation}
If $x$ is such that $d^\ep(0,x)\geq K_1 \ep$ then it follows from
\eqref{ineg3} and Theorem \ref{th-gen-poreux} $(iii)$ that
$\ue(t^\ep, x)=0$, which proves \eqref{goal2}. Next assume that
$x$ is such that $d^\ep(0,x)< K_1 \ep$. Since $U$ is non
increasing we have
$$
(1+\sigma+\ep \sigma L)U\left(\frac{d^\ep(0,x)- K \ep
}\ep\right)\geq (1+\sigma)U(K_1-K) \geq 1+\frac \sigma 2\,,
$$
if $K\gg K_1$ (recall that $U(-\infty)=1$). Then \eqref{goal2}
follows from \eqref{facile} (with $\eta$ replaced by $\sigma /2$).
\end{proof}

We now prove Theorem \ref{th:results}.

\begin{proof}We fix $K\geq 1$ large enough so that Lemma
\ref{condition-initiale2} holds, and $L>0$ large enough so that
Lemma \ref{super-sub-motion1} holds. Therefore, from the
comparison principle, we deduce that
\begin{equation}\label{ok}
u_\ep ^-(t,x) \leq \ue (t+t^\ep,x) \leq u _ \ep ^+(t,x) \quad
\text { for }\; 0 \leq t \leq T-t^\ep\,.
\end{equation}
Note that, since
\begin{equation}\label{sub-lim}
\lim_{\ep\rightarrow 0} u _ \ep ^\pm(t,x)= \left\{
\begin{array}{ll}
1 &\textrm { if } d^\ep(t,x)<0\vsp\\
0 &\textrm { if } d^\ep(t,x)>0\,,\\
\end{array}\right.
\end{equation}
for $t>0$, \eqref{ok} is enough to prove the convergence result,
namely Corollary \ref{cor:cv}. We now choose $\mathcal C$ large
enough so that
\begin{equation}\label{choice}
(1-\frac 3 4 \eta )U(-\mathcal C +e^{LT} +K)\geq 1- \eta\,.
\end{equation}
Note that this choice forces
\begin{equation}\label{choice-bis}
\mathcal C \geq e^{LT}+K\,.
\end{equation}
In the sequel we prove \eqref{resultat}.

Obviously, if $\ep >0$ is small enough, the constant map
$z^+\equiv 1+\eta$ is a super-solution. Therefore we deduce from
Theorem \ref{th-gen-poreux} $(i)$ that $\ue(t+t^\ep,x) \in
[0,1+\eta]$, for all $0\leq t \leq T-t^\ep$.

Next we take $x\in\Omegadrift  _ t\setminus\mathcal N_{\mathcal
C\ep}(\Gammadrift _t)$, i.e.
\begin{equation}\label{d-plus}
d^\ep(t,x)\leq -\mathcal C \ep\,.
\end{equation}
For $\ep >0$ small enough, we have
$$
\begin{array}{ll}
 u_\ep ^-(t,x)&\geq (1-\sigma-\ep \sigma L e^{LT})U(-\mathcal
C +e^{LT}+K)\vsp\\
&\geq (1-\frac 32 \sigma)U(-\mathcal
C +e^{LT}+K)\vsp\\
&\geq (1-\frac 34\eta)U(-\mathcal C +e^{LT}+K)\vsp\\
&\geq 1-\eta\,,
\end{array}
$$
where we have successively used \eqref{eta} and \eqref{choice}. In
view of \eqref{ok} this implies that $\ue(t+t^\ep,x)\geq 1-\eta$,
for all $0\leq t \leq T-t^\ep$.

Finally we take $x\in (\Omega \setminus \overline{\Omegadrift
_t})\setminus\mathcal N_{\mathcal C\ep}(\Gammadrift _t)$, i.e.
\begin{equation}\label{d-moins}
d^\ep(t,x)\geq \mathcal C \ep\,.
\end{equation}
Using \eqref{choice-bis} we see that, for $\ep >0$ small enough,
$d^\ep(t,x)-\ep p(t) \geq 0$ so that  $u _ \ep ^+(t,x)=0$, which,
in view of \eqref{ok} implies that $\ue(t+t^\ep,x)=0$, for $0\leq
t \leq T-t^\ep$.

This completes the proof of Theorem \ref{th:results}. \end{proof}

\vskip 4pt \noindent {\bf Acknowledgement.} The first author is
grateful to  Hiroshi Matano for a fruitful  discussion in Dresden.

\end{document}